\documentclass[a4paper,11pt]{amsart}

\usepackage{hyperref} 
\usepackage[lite]{amsrefs} 
\usepackage{microtype} 
\usepackage{verbatim} 
\usepackage{color}  
\usepackage{amsmath,amssymb} 
\usepackage[all]{xy} 
\usepackage{enumerate} 

\title[Approximating Novikov--Shubin numbers]{Approximating Novikov--Shubin numbers of virtually cyclic coverings}
\author{Holger Kammeyer}
\address{Institute for Algebra and Geometry\\ Karlsruhe Institute of Technology\\ Germany}
\email{holger.kammeyer@kit.edu}
\urladdr{www.math.kit.edu/iag7/~kammeyer/}

\subjclass[2010]{58J50, 55N25, 35P20}
\keywords{L2-invariants, Novikov-Shubin, approximation}
\date{September 2015}

\newtheorem{theorem}{Theorem}
\newtheorem{corollary}{Corollary}
\newtheorem{proposition}{Proposition}
\newtheorem{lemma}{Lemma}

\newtheorem{question}{Question}

\theoremstyle{definition}
\newtheorem{definition}{Definition}

\theoremstyle{remark}

\makeatletter
   \let\c@corollary=\c@theorem
   \let\c@proposition=\c@theorem
   \let\c@lemma=\c@theorem
   \let\c@definition=\c@theorem
   \let\c@remark=\c@theorem
   \let\c@example=\c@theorem
   \let\c@equation=\c@theorem
   \let\c@conjecture=\c@theorem
   \let\c@question=\c@theorem
\makeatother

\newcommand*{\MRref}[2]{ \href{http://www.ams.org/mathscinet-getitem?mr=#1}{MR \textbf{#1}}}
\newcommand*{\arXiv}[1]{ \href{http://www.arxiv.org/abs/#1}{arXiv:\textbf{#1}}}

\newcommand*{\N}{\mathbb N}
\newcommand*{\Z}{\mathbb Z}
\newcommand*{\Q}{\mathbb Q}
\newcommand*{\R}{\mathbb R}
\newcommand*{\C}{\mathbb C}
\newcommand*{\ima}{\textup{i}}

\DeclareMathOperator{\tr}{tr}
\DeclareMathOperator{\rank}{rank}
\DeclareMathOperator{\res}{res}
\DeclareMathOperator{\im}{im}
\DeclareMathOperator{\lcm}{lcm}

\newcounter{commentcounter}

\newcommand{\ignore}[1]{}

\begin{document}

\begin{abstract}  
We assign real numbers to finite sheeted coverings of compact CW complexes designed as finite counterparts to the Novikov--Shubin numbers.  We prove an approximation theorem in the case of virtually cyclic fundamental groups employing methods from Diophantine approximation.
\end{abstract} 

\maketitle

\section{Introduction}

\noindent Let \(X\) be a compact connected CW complex, let \(\widetilde{X}\) be the universal covering and let \(\overline{X}\) be a finite sheeted Galois covering.  In this paper we will define the \emph{alpha numbers} \(\alpha_p(\overline{X}) \in \R\) in terms of the singular value decomposition of the \(p\)-th cellular differential of \(\overline{X}\).  Intuitively, the definition of \(\alpha_p(\overline{X})\) in terms of singular values mimics the definition of \emph{Novikov--Shubin numbers} \(\alpha^{(2)}_p(\widetilde{X})\) in terms of spectral distribution functions.  A natural question then asks whether the Novikov--Shubin numbers can be recovered asymptotically from the net of alpha numbers \((\alpha_p(\overline{X_i}))_{i \in F}\) of all finite Galois coverings of \(X\).  We show that the answer is yes if the fundamental group contains a cyclic subgroup of finite index.

\begin{theorem} \label{thm:approxvcycspaceversion}
Suppose \(\pi_1(X)\) is virtually cyclic and \(\alpha_p(\widetilde{X}) < \infty^+\).  Then
  \[ \alpha^{(2)}_p(\widetilde{X}) = \limsup\limits_{i \in F} \alpha_p(\overline{X}_i). \]
\end{theorem}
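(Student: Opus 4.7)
The plan is to first reduce to the case of an infinite cyclic fundamental group.  Since $\pi_1(X)$ is virtually cyclic, and the hypothesis $\alpha_p(\widetilde{X}) < \infty^+$ rules out the finite case, $\pi_1(X)$ contains a finite-index normal subgroup $H \cong \Z$.  The cover $X' \to X$ associated to $H$ satisfies $\widetilde{X'} = \widetilde{X}$, so $\alpha_p^{(2)}$ is unchanged, while the net of finite Galois coverings of $X$ that factor through $X'$ is cofinal in the full net of coverings of $X$, making the $\limsup$ on the right-hand side the same in either description.  We may therefore assume $\pi_1(X) = \Z$.

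In this setting, the $p$-th cellular differential of $\widetilde{X}$ is a matrix $A$ over $\Z[\Z] = \Z[z, z^{-1}]$, and the Fourier isomorphism $\ell^2(\Z) \cong L^2(S^1)$ converts $A$ into a matrix-valued trigonometric polynomial $A(z)$ acting by multiplication.  The Novikov--Shubin number $\alpha_p^{(2)}(\widetilde{X})$ is then extracted from the local behavior of the smallest non-zero singular value $\sigma(z)$ of $A(z)$ near its zeros on $S^1$: the spectral distribution function of the associated combinatorial Laplacian obeys a power law near zero whose exponent is determined by the maximal vanishing order $k$ of $\sigma$, giving an explicit formula for $\alpha_p^{(2)}$ in terms of $k$.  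On the finite side, the cyclic $n$-fold cover $\overline{X}_n$ decomposes the differential, via the Fourier matrix for $\Z/n$, as $\bigoplus_{\zeta^n = 1} A(\zeta)$, so that $\alpha_p(\overline{X}_n)$ is directly computable from the multiset $\{\sigma(\zeta) : \zeta^n = 1,\ \sigma(\zeta) \neq 0\}$.

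The theorem then reduces to comparing the rate at which $\min_{\zeta^n = 1,\ \sigma(\zeta) \neq 0} \sigma(\zeta)$ tends to zero as $n \to \infty$ with the local order of vanishing of $\sigma$ on $S^1$.  One inequality is essentially elementary: the $(\pi/n)$-density of $n$-th roots of unity yields some $\zeta$ within $O(1/n)$ of a worst zero $z_0$ of order $k$, hence $\sigma(\zeta) = O(n^{-k})$, which suffices to produce the corresponding bound on $\alpha_p(\overline{X}_n)$ along some cofinal subsystem.  The matching inequality requires that no $n$-th root of unity can approach a zero of $\sigma$ on $S^1$ substantially faster than at rate $1/n^k$.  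Since $\sigma^2$ is controlled by a polynomial in $\Z[z,z^{-1}]$, these zeros are algebraic, and effective Diophantine lower bounds of the form $|1 - \zeta/z_0| \geq c(z_0)\, n^{-M}$, obtained from Baker's theorem on linear forms in logarithms, translate into lower bounds on $\sigma(\zeta)$.

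The main obstacle is that the Diophantine exponent $M$ produced by Baker-type bounds is typically much larger than the geometric vanishing order $k$, so a crude application falls short of the sharp equality.  It is decisive here that the theorem asserts a $\limsup$ rather than a $\liminf$: it suffices to exhibit a cofinal subnet of degrees $n$ along which the Diophantine loss disappears.  I would seek such a subnet either by restricting to $n$'s whose multiplicative structure is adapted to the algebraic zeros of $\sigma$, or by using Kronecker's theorem and height estimates to constrain where integer-polynomial zeros can lie on $S^1$ in the first place -- noting in particular that if $\sigma$ has zeros on $S^1$ that are themselves roots of unity, they coincide with some $\zeta$ exactly for infinitely many $n$.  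Matching the Diophantine exponent along such a selection to the geometric order $k$ is where I expect the substantive work of the proof to concentrate.
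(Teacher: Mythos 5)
Your high-level outline is on target: reduce to $\pi_1 = \Z$, pass via Fourier transform to a matrix of Laurent polynomials, read $\alpha_p^{(2)}$ off the vanishing order $k$ of the smallest positive singular value along $S^1$, observe that the finite covers sample this function at roots of unity, and then match the two growth rates. You also correctly diagnose the central obstruction, namely that the $\limsup \ge 1/k$ direction needs a lower bound $\sigma^+(A_n) \gtrsim n^{-k}$ along a cofinal set of $n$, and that Baker's theorem cannot give this because its exponent $M$ swamps $k$. This is exactly right, and it is why Baker plays \emph{no} role in the proof of this theorem in the paper; it enters only for the weaker assertion $\liminf_{i\in F}\alpha(A_i) > 0$ (Theorem~\ref{thm:liminfpositive}), where a lower bound of $1/(Mk)$ is all that is wanted.

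The gap is that you never actually produce the cofinal subnet. The alternatives you float -- adapting $n$'s ``multiplicative structure'' to the algebraic zeros, or using ``Kronecker's theorem and height estimates to constrain where integer-polynomial zeros can lie'' -- are not the mechanism that works, and the second is a misreading of what Kronecker buys you. The paper's Proposition~\ref{prop:middlengonedge} uses Kronecker's theorem not to constrain the zeros but to understand the $\Z$-orbit closure of the torus point $L=(L_1,\dots,L_u)\in\mathbb{T}^u$ (where $a_t=e^{2\pi\ima L_t}$): the orbit closure is a finite union of translates of a subtorus, and one can choose a neighborhood $U_L$ of a point in the identity component lying inside the cube $\{\|x_t\|\ge R\}$. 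Divisibility of the subtorus then makes $\overline{B_{KmL}}=\overline{B_{mL}}$ for every $K$, so one extracts, for each fixed $K$, infinitely many $i_j$ with $Ki_jL\in U_L$. That is precisely the statement that every $Ki_j$-th root of unity either equals $a_t$ or stays at angular distance $\ge 2\pi R/(Ki_j)$ from it, which yields $\sigma^+(A_{Ki_j})\gtrsim (Ki_j)^{-k}$ with no Diophantine loss. Quantifying over all $K$ is what makes this a genuine cluster point of the net indexed by divisibility, not merely of the sequence. Without this construction the proof does not close; this is the substantive work you flagged but did not supply. Also note that the algebraicity of the zeros (which your Baker-first framing relies on) is not needed for the $\limsup$: the Kronecker argument works over $\C$, and only the $\liminf>0$ result genuinely requires $\Q$-coefficients.

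Two smaller points. First, the passage from $(r\times s)$-matrices to a single Laurent polynomial, which you describe as reading off ``the smallest non-zero singular value $\sigma(z)$,'' is in the paper a Smith-normal-form reduction (Proposition~\ref{prop:rbystoonebyone}) together with singular-value inequalities to show $m^+(A_i)$ stays bounded; that boundedness is needed to make the alpha numbers of $A$ and of the top invariant factor share cluster points, and should not be waved through. Second, your reduction from virtually cyclic $G$ to $\Z$ is stated for spaces; the paper's version restricts the matrix along $\res^Z_G$ and checks both $\res^Z_G(A)_i = A_i$ and that the residual system $\{Z_i\}$ of $G$ is cofinal in the full one, which is the content of Propositions~\ref{prop:equalmatrices} and~\ref{prop:equallimits}. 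Your cofinality claim is correct in spirit but needs the ``intersect with $Z$'' step to be an actual argument.
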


\noindent Moreover, we construct a CW complex \(X\), obtained from \(S^1 \vee S^2\) by attaching one 3-cell, such that \(\alpha_3^{(2)}(\widetilde{X}) = 1\) but \(0 < \liminf_{i \in F} \alpha_3(\overline{X}_i) \le \frac{1}{2}\).
\subsection{The definition of alpha numbers}
To construct the numbers \(\alpha_p(\overline{X}_i)\), we will have to take a close look on the definition of Novikov--Shubin numbers.   In doing so, let us go over from spaces to matrices which seem to form the appropriate setting for the approximation theory of \(L^2\)-invariants.

Let \(G\) be a countable, discrete group and let \(A \in M(r,s; \C G)\) be a matrix inducing the right multiplication operator \(r_{AA^*}^{(2)} \colon (\ell^2 G)^r \rightarrow (\ell^2 G)^r\) given by \(x \mapsto xAA^*\).   Here the matrix \(A^*\) is obtained from \(A\) by transposing and applying the canonical involution \((\sum \lambda_g g)^* = \sum \overline{\lambda_g} g^{-1}\) to the entries.  Let \(\{ E^{AA^*}_\lambda \}_{\lambda \ge 0}\) be the family of equivariant spectral projections obtained from \(r^{(2)}_{A A^*}\) by Borel functional calculus, \(E^{A A^*}_\lambda = \chi_{[0,\lambda]}(r^{(2)}_{AA^*})\), where \(\chi_{[0,\lambda]}\) is the characteristic function of the interval \([0,\lambda]\).  Recall that the group von Neumann algebra \(\mathcal{N}(G)\) of \(G\) comes endowed with a canonical finite, faithful, normal trace \(\tr_{\mathcal{N}(G)}\) which extends diagonally to equivariant operators of \((\ell^2 G)^r\).

\begin{definition}
  The function \(F_A \colon [0,\infty) \rightarrow [0,\infty)\) given by \(\lambda \mapsto \tr_{\mathcal{N}(G)}E^{AA^*}_{\lambda^2}\) is called the \emph{spectral distribution function} of the matrix \(A\).  The \emph{upper Novikov--Shubin number} of \(A\) is given by
\[ \overline{\alpha}^{(2)}(A) = \limsup_{\lambda \rightarrow 0^+} \frac{\log(F_A(\lambda) - F_A(0))}{\log \lambda} \in [0,\infty] \]
unless \(F_A(\lambda) = F_A(0)\) for some \(\lambda > 0\) in which case we set \(\alpha^{(2)}(A) = \infty^+\).  The \emph{lower Novikov--Shubin number} \(\underline{\alpha}^{(2)}(A)\) of \(A\) is defined similarly with ``\(\liminf\)'' in place of ``\(\limsup\)''.  We say that \(A\) has the \emph{limit property} if \(\overline{\alpha}^{(2)}(A) = \underline{\alpha}^{(2)}(A)\).  In this case we simply call this common value the \emph{Novikov--Shubin number} \(\alpha^{(2)}(A)\).
\end{definition}

\noindent The formal symbol ``\(\infty^+\)'' indicates a spectral gap at zero.  We adopt the convention that \(c < \infty < \infty^+\) for all \(c \in [0,\infty)\).  Novikov--Shubin numbers thus capture the polynomial growth rate near zero of the spectral distribution function \(F_A\).  More precisely, if there are constants \(C, d, \varepsilon > 0\) such that \(C^{-1} \lambda^d \le F_A(\lambda) - F_A(0) \le C \lambda^d\) for \(\lambda \in [0, \varepsilon)\), then \(A\) has the limit property and \(\alpha^{(2)}(A) = d\).  We should say that while the distinction between upper and lower Novikov--Shubin numbers is already contained in \cite{Gromov-Shubin:vonNeumannSpectra}, the (somewhat arbitrary) decision that \(\alpha^{(2)}(A)\) should mean \(\underline{\alpha}^{(2)}(A)\) has become accepted in the literature.

    Now let \(G\) be \emph{residually finite} meaning there exists a \emph{residual system} \((G_i)_{i \in I}\), an inverse system of finite index normal subgroups directed by inclusion over a directed set \(I\) with trivial total intersection.  We obtain matrices \(A_i \in M(r,s; \C (G/G_i))\) from \(A\) by applying the canonical projections \(\C G \rightarrow \C(G/G_i)\) to the entries.  Set \(n_i = [G \colon G_i]\).  Then the group algebra \(\C(G/G_i)\) embeds as a subalgebra of \(M(n_i, n_i; \C)\) by means of the left regular representation of the finite group \(G/G_i\).  Accordingly, we can view \(A_i\) as lying in \(M(r n_i, s n_i; \C)\).  So we can consider the positive \emph{singular values}
\[ \sigma_1(A_i) \ge \cdots \ge \sigma_{r_i}(A_i) > 0 \]
of \(A_i\) given by \(\sigma_j(A_i) = \sqrt{\lambda_{j,i}}\) where the \(\lambda_{j,i}\) are the positive eigenvalues of \(A_iA_i^*\) in non-ascending order and \(r_i = \rank_\C A_i\).  We denote the \emph{multiplicity} of \(\sigma_j(A_i)\) as \(m_j(A_i) = \dim_\C \ker(A_i A_i^* - \lambda_{j,i})\) and set \(m_{r_i + 1}(A_i) = \dim_\C\ker(A_i A_i^*)\).  With this data, the spectral distribution function \(F_{A_i}\) can be described as a monotone, right continuous step function with jumps at the singular values \(\sigma_j(A_i)\) and jump size \(\frac{m_j(A_i)}{n_i}\).  It is known that these step functions approximate the spectral distribution function \(F_A\).  More precisely,
\[ F_A(\lambda) = \lim_{\delta \rightarrow 0^+} {\textstyle \limsup\limits_{i \in I}}\, F_{A_i}(\lambda + \delta) = \lim_{\delta \rightarrow 0^+} {\textstyle \liminf\limits_{i \in I}}\, F_{A_i}(\lambda + \delta) \]
as is proven in \cite{Lueck:Approximating}*{Theorem~2.3.1} for residual chains (when \(I\) is totally ordered), the proof for residual systems being similar.

So we might want to think about the values \(F_{A_i}(\sigma_j(A_i)) = \sum_{k \ge j} \frac{m_k(A_i)}{n_i}\) as experimental samples of the function of interest \(F_A\).  To extract the growth rate of \(F_A\) from these samples we do what every physicist would do: we measure the slope of the regression line through the doubly logarithmic scatter plot of the samples.  The sample that is most valuable for our purposes is given by the first positive singular value \(\sigma^+(A_i) = \sigma_{r_i}(A_i)\) with multiplicity \(m^+(A_i) = m_{r_i}(A_i)\).

\begin{definition}
The \emph{alpha number} of a nonzero \(A_i \in M(r,s; \C(G/G_i))\) is
\[ \alpha(A_i) = \frac{\log\frac{m^+(A_i)}{[G \colon G_i]}}{\log \sigma^+(A_i)} \in \R. \]
\end{definition}

\noindent Choosing the first positive singular value in the above definition serves a double purpose.  Firstly, this makes sure that the growth behavior close to zero is reflected because \(\lim_i \sigma_+(A_i) = 0\) whenever \(\alpha^{(2)}(A) < \infty^+\).  Secondly, since therefore \(\log \sigma_+(A_i)\) tends to \(-\infty\), the alpha number ultimately measures the slope of the line through the origin which is parallel to the regression line and hence has the same slope.  Finally note that the embedding \(\C (G/G_i) \subset M(n_i,n_i; \C)\) as a subalgebra is unique up to conjugating with a permutation matrix and a diagonal matrix with entries \(\pm 1\).  Any two resulting embeddings \(M(r, s; \C (G/G_i)) \subset M(r n_i, s n_i; \C)\) are thus conjugate by a unitary transformation which leaves the singular value decomposition unaffected.  This shows that the alpha number is well-defined.

\subsection{Approximating Novikov--Shubin numbers by alpha numbers}

The canonical example of a residual system is the \emph{full residual system} \((G_i)_{i \in F}\) of \emph{all} finite index normal subgroups of \(G\).  We ask the following question.

\begin{question} \label{question:approxmatrixversion}
  Let \(G\) be a residually finite group, let \(\Q \subset F \subset \C\) be a field and let \(A \in M(r,s; F G)\).  Suppose that \(\overline{\alpha}^{(2)}(A) < \infty^+\).  Is it true that
  \begin{enumerate}[(a)]
  \item \label{item:sup} \(\overline{\alpha}^{(2)}(A) = \limsup_{i \in F} \alpha(A_i)\)?
  \item \label{item:inf} \(\underline{\alpha}^{(2)}(A) = \liminf_{i \in F} \alpha(A_i)\)?
  \end{enumerate}
\end{question}

\noindent In this paper we answer Question~\ref{question:approxmatrixversion} for virtually cyclic groups.

\begin{theorem} \label{thm:approxvcycmatrixversion}
Let \(G\) be a virtually cyclic group and let \(\Q \subset F \subset \C\) be an arbitrary field.  Then the answer to Question~\textup{\ref{question:approxmatrixversion}\,\eqref{item:sup}} is positive and the answer to Question~\textup{\ref{question:approxmatrixversion}\,\eqref{item:inf}} is negative.
\end{theorem}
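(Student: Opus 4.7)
The plan is to reduce to $G = \Z$ and then work via Fourier analysis on $S^1$, obtaining the positive $\limsup$ statement from pigeonhole combined with Weyl equidistribution, and the counterexample to the $\liminf$ statement from Hurwitz's refinement of Dirichlet's approximation theorem.

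I first reduce from virtually cyclic $G$ to $G = \Z$. An infinite virtually cyclic group contains a cyclic subgroup $C \cong \Z$ of finite index $d$; choosing an $FC$-basis of $FG$ converts $A \in M(r, s; FG)$ into a larger matrix $A' \in M(rd, sd; FC)$ with identical Novikov--Shubin numbers. The finite index subgroups of $C$ are cofinal in the full residual system of $G$, so both $\limsup$ and $\liminf$ of the alpha numbers are unchanged. (Finite $G$ is trivial.) For $G = \Z$, the Fourier isomorphism $\ell^2\Z \cong L^2(S^1)$ identifies $A \in M(r, s; F[t, t^{-1}])$ with the matrix-valued multiplier $A(z)$ on $S^1$, giving
\[ F_A(\lambda) = \tfrac{1}{2\pi}\int_0^{2\pi} \#\bigl\{j : \lambda_j\bigl(A(e^{\ima\theta})A(e^{\ima\theta})^*\bigr) \le \lambda^2\bigr\}\, d\theta. \]
Using Smith normal form over the PID $F[t, t^{-1}]$---whose invertible matrices have determinant a unit $ct^m$ and are therefore $L^\infty$-bounded on $S^1$ together with their inverses, so that singular values are only altered by bounded multiplicative factors and alpha numbers by $o(1)$ additive terms---I may assume $A = \mathrm{diag}(p_1,\ldots)$ is diagonal. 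Letting $k_{\max}$ denote the largest order of vanishing of any $p_j$ at any point of $S^1$, a direct computation gives $F_A(\lambda) - F_A(0) \sim C\lambda^{1/k_{\max}}$ as $\lambda \to 0$, so $\overline{\alpha}^{(2)}(A) = \underline{\alpha}^{(2)}(A) = 1/k_{\max}$ and the limit property is automatic in the cyclic case. Moreover, Fourier transform diagonalizes $A_i$ over $\Z/n_i$ into blocks $A(\zeta)$ at the $n_i$-th roots of unity $\zeta$, so $\sigma^+(A_i) = \min_\zeta \sigma^+(A(\zeta))$.

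For the positive answer in (a), the easy bound $\limsup \alpha(A_i) \le 1/k_{\max}$ follows by pigeonhole: some $n_i$-th root of unity lies within $\pi/n_i$ of any fixed zero $z_0$, forcing $\sigma^+(A_i) \le Cn_i^{-k_{\max}}$ and $\alpha(A_i) \le 1/k_{\max} + o(1)$. For the matching lower bound, I write each zero as $z_0 = e^{2\pi\ima\theta_0}$ and apply Weyl equidistribution to the sequence $\{k(n_0\theta_0)\}_k$: for any $n_0$, a positive density of multiples $n = kn_0$ satisfy $\{n\theta_0\} \in [c, 1-c]$ simultaneously for every singular $\theta_0$ (union-bounding over the finitely many singular points), so on this cofinal sequence every $n$-th root of unity stays at distance $\ge c'/n$ from every zero. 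This gives $\sigma^+(A_n) \ge c''n^{-k_{\max}}$ and $\alpha(A_n) \to 1/k_{\max}$, proving $\limsup \alpha(A_i) = 1/k_{\max} = \overline{\alpha}^{(2)}(A)$. Root-of-unity zeros (rational $\theta_0$) are handled by an elementary gcd computation.

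For the negative answer in (b), I take $A = [p(t)]$ with $p$ a Salem polynomial over $\Z$ having a simple zero $z_0 = e^{2\pi\ima\theta_0}$ on $S^1$, so that $\underline{\alpha}^{(2)}(A) = 1$. Since $z_0$ is algebraic but not a root of unity, $\theta_0$ is transcendental by Hermite--Lindemann, in particular irrational. Hurwitz's theorem applied to $n_0\theta_0$ provides infinitely many $k$ with $\{k(n_0\theta_0)\} < 1/(\sqrt{5}\,k)$, giving a cofinal subsequence $n = kn_0$ on which $\sigma^+(A_n) \le 2\pi n_0/(\sqrt{5}\,n^2)$ and hence $\alpha(A_n) \to 1/2$. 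Thus $\liminf \alpha(A_i) \le 1/2 < 1 = \underline{\alpha}^{(2)}(A)$, the desired counterexample. The main technical obstacle I anticipate is the cofinality bookkeeping under the directed-set structure given by divisibility of indices: both the Weyl argument for (a) and the Hurwitz argument for (b) must produce their good (respectively, bad) $n$ as multiples of arbitrary prescribed $n_0$, which follows from the irrationality of $n_0\theta_0$. A secondary point is the rigorous $o(1)$ control of alpha numbers under the Smith normal form reduction, which as noted rests on $L^\infty$-boundedness of polynomial invertibles on $S^1$.
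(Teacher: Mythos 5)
Your overall architecture matches the paper's: reduce the virtually cyclic group to $\Z$ via a finite-index cyclic subgroup, diagonalize via Fourier on $S^1$, put the matrix into Smith normal form, and settle the $1\times 1$ case by Diophantine approximation. Your choice of number-theoretic tools differs, and validly so: where the paper invokes Kronecker's theorem to pin down the closure of the $\Z$-orbit of the angle vector $(\theta_1,\ldots,\theta_u)$ in the torus and then finds a small cube in that closure, you use Weyl equidistribution of each $\{k n_0 \theta_t\}_k$ separately together with a union bound on the bad sets, which is cleaner and sidesteps the need to identify the subtorus explicitly. For the counterexample, Hurwitz is a drop-in replacement for the paper's Dirichlet (indeed Dirichlet already gives $\|q\beta\|\le 1/q$, which is all that is needed, and the paper's explicit $p(z)=5z^2-6z+5$ is simpler than a Salem polynomial), and the appeal to Hermite--Lindemann for irrationality of $\theta_0$ is overkill and in fact the wrong theorem --- a rational $\theta_0$ would make $z_0$ a root of unity by an elementary argument, which is all you use.

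There is however a genuine gap: you never control the multiplicity $m^+(A_i)$ of the smallest positive singular value, and without this the passage from singular-value estimates to alpha-number estimates fails. Recall $\alpha(A_i)=\log\bigl(m^+(A_i)/n_i\bigr)/\log\sigma^+(A_i)$; your estimate $\sigma^+(A_n)\ge c'' n^{-k_{\max}}$ on the good Weyl subsequence only yields $\alpha(A_n)\to 1/k_{\max}$ if $m^+(A_n)$ is bounded, since otherwise the numerator $\log\bigl(m^+(A_n)/n\bigr)$ need not be $-\log n+O(1)$. Likewise your claim that the Smith normal form changes alpha numbers only by $o(1)$ additive terms implicitly assumes $\log\bigl(m^+(A_i)/m^+((SAT)_i)\bigr)=O(1)$, which is exactly the boundedness of multiplicities; bounding the singular values alone is not enough. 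The paper devotes Lemma~\ref{lemma:growthnearroots} and a pigeon-hole argument in the proof of Proposition~\ref{prop:rbystoonebyone} to establishing $1\le m^+(A_i)\le M$ for all large $i$, and for the $1\times 1$ case uses the strict local monotonicity of $t\mapsto|p(e^{2\pi \ima t})|$ near its zeros to get $m^+\le 2u$. You should add an argument of this kind, adapted to your diagonal normal form.
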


\noindent We remark that the related \emph{approximation conjecture for Fuglede--Kadison determinants} \cite{Lueck:Survey}*{Conjecture~6.2} is likewise only known for virtually cyclic groups \cite{Schmidt:DynamicalSystems}.  Though the class of groups is small, the proof of Theorem~\ref{thm:approxvcycmatrixversion} is nontrivial and requires number theoretic input.  Here also lies the reason for the symmetry breaking answer which at first glance might come as a surprise.  It is the existence of infinitely many good rational approximations to a given irrational number which tears the lower limit apart from the upper one.  But for virtually cyclic \(G\) it is easy to see that every \(A \in M(r,s;\C G)\) has the limit property.  So even for virtually cyclic groups the equality \(\alpha^{(2)}(A) = \limsup_{i \in F} \alpha(A_i)\) cannot be improved to \(\alpha^{(2)}(A) = \lim_{i \in F} \alpha(A_i)\).  However, for \(F = \Q\) we can show that \(\liminf_{i \in F} \alpha(A_i)\) is always positive as a consequence of a result in transcendence theory.  We will discuss this in a moment but first let us return from matrices to spaces and explain that the case \(F = \Q\) of Theorem~\ref{thm:approxvcycmatrixversion} gives Theorem~\ref{thm:approxvcycspaceversion} and the example below it.

Let \(X\) be a connected finite CW complex with \(G = \pi_1(X)\) residually finite.  Choosing a cellular basis of \(X\) gives rise to an isomorphism that identifies the \(p\)-th cellular chain module \(C_p(\widetilde{X})\) of the universal covering with the standard left \(\Z G\)-module \((\Z G)^{N_p}\).  Here \(N_p\) is the number of \(p\)-cells of \(X\) or, equivalently, the number of \(G\)-equivariant \(p\)-cells of the \(G\)-CW complex \(\widetilde{X}\).  Under this isomorphism the \(G\)-equivariant differential \(d_p \colon C_p(\widetilde{X}) \rightarrow C_{p-1}(\widetilde{X})\) of the chain complex \(C_*(\widetilde{X})\) is represented by right multiplication with a matrix \(A(\widetilde{X}, p) \in M(N_p,N_{p-1}; \Z G)\).  We define the \emph{\(p\)-th Novikov--Shubin number} of \(\widetilde{X}\) as \(\alpha^{(2)}_p(\widetilde{X}) = \alpha^{(2)}(A(\widetilde{X}, p))\).  Note that in \cite{Lueck:L2Invariants}*{Definition~2.16, p.\,81} one restricts the induced operator \(d_p \colon \ell^2(G)^{N_p} \rightarrow \ell^2 (G)^{N_{p-1}}\) to the orthogonal complement of \(\im d_{p+1}\) to make sure the spectral distribution function takes the value \(b^{(2)}_p(\widetilde{X})\) at zero.  For the Novikov--Shubin numbers this is of course irrelevant. 

Given a finite index normal subgroup \(G_i \subset G\) we can construct the finite covering space \(\overline{X}_i\) with deck transformation group \(G/G_i\) as \(G_i \backslash \widetilde{X}\).  The chosen cellular basis of \(X\) identifies \(C_p(\overline{X}_i) \cong (\Z (G/G_i))^{N_p}\) and the differential \(d^i_p \colon C_p(\overline{X}_i) \rightarrow C_{p-1}(\overline{X}_i)\) is thus represented by right multiplication with a matrix \(A(\overline{X}_i, p)\) which coincides with the matrix \(A(\widetilde{X},p)_i\) obtained from \(A(\widetilde{X}, p)\) by applying the canonical projection \(\Z G \rightarrow \Z (G/G_i)\) to the entries.  We define the \emph{\(p\)-th alpha number} of \(\overline{X}_i\) as \(\alpha_p(\overline{X}_i) = \alpha(A(\overline{X}_i, p))\).  Both Novikov--Shubin numbers and alpha numbers are well-defined because the isomorphisms \(C_p(\widetilde{X}) \cong (\Z G)^{N_p}\) and \(C_p(\overline{X}_i) \cong (\Z (G/G_i))^{N_p}\) are unique up to unitaries.

With these definitions it is immediate that Theorem~\ref{thm:approxvcycmatrixversion} implies Theorem~\ref{thm:approxvcycspaceversion}.  It is moreover well-known that matrices in \(M(r, s; \Z G)\) can be realized as cellular differentials of \(G\)-CW complexes, compare \cite{Lueck:L2Invariants}*{Lemma~10.5, p.\,371}.  In this way the counterexample we will construct for Question~\ref{question:approxmatrixversion}\,\eqref{item:inf} translates to the example mentioned below Theorem~\ref{thm:approxvcycspaceversion}.

\subsection{The role of the coefficient field}

This realization of matrices over \(\Z G\) as differentials of based \(G\)-CW complexes is why Theorem~\ref{thm:approxvcycspaceversion} is actually equivalent to (a positive answer to) Question~\ref{question:approxmatrixversion}\,\eqref{item:sup} for \(F = \Q\).  Similarly, the aforementioned determinant approximation conjecture \cite{Lueck:Survey}*{Conjecture~6.2} is formulated for coefficients in \(\Q\).  It is remarkable that for coefficients in \(\C\) the statement of the determinant approximation conjecture is wrong, even in the case of a \((1 \times 1)\)-matrix over \(\C[\Z]\), see \cite{Lueck:L2Invariants}*{Example~13.69, p.\,481}.  This is just one instance showing that coefficients matter for approximation questions.  In the ``topological case'' \(F = \Q\), there are results in the theory of linear forms in (two) logarithms which are of value to us.  They allow at least the conclusion that \(\liminf_{i \in F} \alpha(A_i)\) is positive, as it should be, because so is every \(\alpha^{(2)}(A)\).

\begin{theorem} \label{thm:liminfpositive}
Let \(G\) be a virtually cyclic group and let \(A \in M(r, s; \Q G)\) with \(\alpha^{(2)}(A) < \infty^+\).  Then \(\liminf_{i \in F} \alpha(A_i) > 0\).
\end{theorem}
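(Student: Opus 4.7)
The plan is to reduce to the case $G = \Z$ and $A \in M(r, s; \Q[t, t^{-1}])$, use the Fourier decomposition of $\C[\Z/n]$ to express $\sigma^+(A_n)$ in terms of evaluations of $A$ at $n$-th roots of unity, and then invoke Baker's theorem on linear forms in two logarithms to produce a polynomial lower bound of the form $\sigma^+(A_n) \ge C\, n^{-C'}$, from which positivity of the $\liminf$ follows.

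Since $G$ is virtually cyclic, pick an infinite cyclic subgroup $H \cong \Z$ of finite index. The standard restriction procedure realizes $\Q G$ as a free $\Q H$-module of finite rank and turns $A$ into a matrix $A'$ over $\Q[t, t^{-1}]$ whose nonzero singular values are those of $A$, and for which the alpha numbers compare within bounded factors under the residual systems of $G$ and $H$. I therefore assume $G = \Z$ henceforth. The Fourier isomorphism $\C[\Z/n] \cong \bigoplus_{\zeta^n = 1} \C$ is realized by an orthonormal change of basis, identifying $A_n$ with the block diagonal matrix $\bigoplus_{\zeta^n = 1} A(\zeta)$. Hence
\[ \sigma^+(A_n) = \min\bigl\{\sigma_{\min}^+(A(\zeta)) : \zeta^n = 1,\ A(\zeta) \ne 0\bigr\}. \]
Let $k_0 = \rank_{\Q(t)} A$ and let $E(t) \in \Q[t, t^{-1}]$ denote the $k_0$-th elementary symmetric polynomial of the eigenvalues of $AA^*$, which is a nonzero Laurent polynomial. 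When $E(\zeta) \ne 0$ one has $\rank A(\zeta) = k_0$, and a Cauchy-type bound on the smallest positive root of the characteristic polynomial of $A(\zeta)A(\zeta)^*$ yields
\[ \sigma_{\min}^+(A(\zeta))^2 \ge |E(\zeta)|\, M^{1 - k_0}, \]
where $M = \sup_{\zeta \in S^1} \|A(\zeta)\|^2$ is fixed. The finitely many $\zeta \in S^1$ with $E(\zeta) = 0$ contribute only positive constants depending on $A$ and are asymptotically irrelevant.

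The heart of the proof is the polynomial lower bound $|E(\zeta)| \ge C_0\, n^{-C_1}$ for $n$-th roots of unity with $E(\zeta) \ne 0$. Factor $E(t) = c\, t^{d_0} \prod_j (t - \beta_j)$ over $\C$ and treat each factor separately: if $|\beta_j| \ne 1$, then $|\zeta - \beta_j|$ is uniformly bounded below; if $\beta_j$ is a root of unity, then $|\zeta - \beta_j| \ge c'/n$ whenever nonzero by elementary arithmetic of rationals with denominator dividing $n$; and if $\beta_j$ is algebraic on the unit circle but not a root of unity, then for $\zeta = e^{2\pi i k/n}$ the quantity
\[ \Lambda = n \log \beta_j - 2k\log(-1) \]
is nonzero, and Baker's theorem on linear forms in two logarithms of algebraic numbers gives $|\Lambda| \ge c\, n^{-\kappa}$ with constants depending only on $\beta_j$. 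Rewriting, $|\zeta - \beta_j| = |\Lambda|/n \ge c''\, n^{-\kappa - 1}$. Multiplying the bounds over all $j$ yields the claimed inequality on $|E(\zeta)|$.

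Combining steps, $\sigma^+(A_n) \ge C_2\, n^{-C_3}$ for constants depending only on $A$. The algebraic function $\zeta \mapsto \sigma_{\min}^+(A(\zeta))^2$ on $S^1$ is non-constant (otherwise $\alpha^{(2)}(A) = \infty^+$, contrary to hypothesis), so each of its level sets is finite, and $m^+(A_n)$ is bounded by a constant independent of $n$. The approximation of $F_A$ by the $F_{A_n}$ recalled in the introduction together with $\alpha^{(2)}(A) < \infty^+$ forces $\sigma^+(A_n) \to 0$, so $\log \sigma^+(A_n) \to -\infty$ and
\[ \alpha(A_n) = \frac{\log(m^+(A_n)/n)}{\log \sigma^+(A_n)} \ge \frac{\log(m^+(A_n)) - \log n}{\log C_2 - C_3 \log n} \longrightarrow \frac{1}{C_3} > 0, \]
which gives $\liminf_{i \in F} \alpha(A_i) \ge 1/C_3 > 0$. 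The sole nontrivial ingredient is Baker's theorem; for a transcendental root $\beta_j$ no polynomial lower bound on $|\zeta - \beta_j|$ is available, and this is precisely the point at which the restriction to $\Q$-coefficients becomes essential.
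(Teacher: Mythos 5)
Your proof is correct, and while it uses the same overall strategy as the paper (reduce to $G=\Z$, reduce a matrix to a single Laurent polynomial, then invoke Baker's theorem for a polynomial lower bound at roots of unity), the middle reduction is genuinely different. The paper passes to Smith normal form and, via the singular-value inequalities and the growth estimates of Lemma~\ref{lemma:growthnearroots} in the proof of Proposition~\ref{prop:rbystoonebyone}, controls $\sigma^+(A_i)$ and $m^+(A_i)$ by the largest invariant factor $p_k(z)$; this machinery is shared with the proof of Theorem~\ref{thm:approxvcycmatrixversion}. You instead work with the Laurent polynomial $E(t)$, the $k_0$-th elementary symmetric polynomial of the eigenvalues of $AA^*$, and bound $\sigma^+_{\min}(A(\zeta))^2 \ge |E(\zeta)|\,M^{1-k_0}$ by a Cauchy-type estimate. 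Up to units $E$ equals $\prod_l |p_l|^2$ over the invariant factors, so both proofs ultimately bound the same quantity; but your route needs no invertible change of basis and replaces the pigeon-hole count of Proposition~\ref{prop:rbystoonebyone} for bounding $m^+(A_i)$ with a short level-set argument for the eigenvalue functions of $A(\zeta)A(\zeta)^*$ (valid once $\sigma^+(A_i)\to 0$ excludes constant eigenvalue branches). What the paper's route buys in exchange is the quantitative refinement $\liminf_{i\in F}\alpha(A_i)\ge \alpha^{(2)}(A)/(D+1)$ of Corollary~\ref{cor:boundonliminf}, which you do not recover because you do not track the exponent $C_3$ against $\mu_0 = 1/\alpha^{(2)}(A)$. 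The Baker step itself is essentially Theorem~\ref{thm:circlerunner}, and your closing observation about transcendental roots is exactly the paper's motivation for the $\Q$-coefficient hypothesis.

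Two places to tighten the writing: the line $|\zeta-\beta_j| = |\Lambda|/n$ should read $|\zeta-\beta_j| = 2\bigl|\sin\bigl(\tfrac{|\Lambda|}{2n}\bigr)\bigr| \asymp |\Lambda|/n$, which is what the subsequent inequality actually uses and also needs $|k|\lesssim n$ so that $B\lesssim n$ in Baker's bound. And the reduction to $\Z$ deserves the precise content of Propositions~\ref{prop:equalmatrices} and~\ref{prop:equallimits} (namely $\res^Z_G(A)_i = A_i$ as complex matrices and equality of the cluster point sets over $F(Z)$ and $F(G)$) rather than the looser ``compare within bounded factors.'' Neither affects correctness.
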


\noindent For Theorem~\ref{thm:approxvcycspaceversion} this says that while it can happen that \(\liminf_{i \in F} \alpha_p(\overline{X}_i) < \limsup_{i \in F} \alpha_p(\overline{X}_i)\), at least we have \(\liminf_{i \in F} \alpha_p(\overline{X}_i) > 0\).  In fact, the number theory involved gives something stronger than Theorem~\ref{thm:liminfpositive}, namely the existence of some \(D > 0\) such that \(\liminf_{i \in F} \alpha(A_i) \ge \frac{\alpha^{(2)}(A)}{D+1}\) together with some explicit bounds for the constant \(D\) in terms of degree and height of a certain polynomial associated with \(A\).  For the precise statement see Corollary~\ref{cor:boundonliminf}.

\subsection{Outline and organization of the paper}

Our proofs of Theorem~\ref{thm:approxvcycmatrixversion} and Theorem~\ref{thm:liminfpositive} rely on methods from Diophantine approximation and transcendence theory.  Since these are topics that tend to fall short in a typical topologist's curriculum, we give a brief recap in Section~\ref{section:preliminaries} and recall the theorems of Dirichlet, Kronecker, Gelfond--Schneider and a baby version of Baker's theorem.  We also fix the terminology we use in the context of nets.

In Section~\ref{section:onebyone} we start with the proof of Theorem~\ref{thm:approxvcycmatrixversion}.  As a warm-up we consider the case of the easiest polynomial \(p(z) = z-1\) and show that Dirichlet's theorem easily answers Question~\ref{question:approxmatrixversion}\,\eqref{item:inf} in the negative.  To answer Question~\ref{question:approxmatrixversion}\,\eqref{item:sup} affirmatively, we then move on with the case of a \((1 \times 1)\)-matrix over the group ring \(\C[\Z]\).  It turns out that again one runs into a problem of Diophantine approximation: Can one find a sequence of regular \(i\)-gons whose vertices are far away from given elements of the unit circle?  Solving this problem amounts to understanding how the rational dependency of coordinates of a torus point determines the closure of its \(\Z\)-orbit.  This is what Kronecker's theorem accomplishes.

In Section~\ref{section:rbys} we perform the passage to \((r \times s)\)-matrices over \(\C[\Z]\).  The methods are singular value inequalities and another simple but effective tool that is widely employed in Diophantine approximation:  the pigeon hole principle.

Section~\ref{section:virtuallycyclic} reduces the general case of a virtually cyclic group to the case of the group \(\Z\) and thereby finishes the proof of Theorem~\ref{thm:approxvcycmatrixversion}.

Finally, Section~\ref{section:liminfpositive} discusses the case of rational coefficients.  The little Baker theorem and thus the theory of bounding linear forms in (two) logarithms is what allows in this case the conclusion of Theorem~\ref{thm:liminfpositive}.  

\subsection{Acknowledgements}

I am indebted to Yann Bugeaud, Wolfgang L\"uck, Malte Pieper, Henrik R\"uping, Roman Sauer and Thomas Schick for helpful conversations.

\section{Preliminaries}
\label{section:preliminaries}

\subsection{Some facts from Diophantine approximation} \label{subsection:diophantine}

For a real number \(x\) let \(\|x\|\) denote the distance to the closest integer.  It is easy to see that the usual triangle equality \(\| x + y \| \le \|x\| + \|y\|\) holds.  From this it follows that \(\|nx\| \le \lvert n \rvert \|x\|\) for any \emph{integer} \(n\).  Dirichlet famously concluded the following result from the pigeon hole principle.

\begin{theorem}[Dirichlet, {\raise.17ex\hbox{$\scriptstyle\sim$}}1840] \label{thm:Dirichlet}
  Given real numbers \(l_1, \ldots, l_u\) and a natural number \(N\), there is \(1 \le q \le N\) such that \(\|q l_i\| \le N^{-\frac{1}{u}}\) for all \(i = 1, \ldots, u\).
\end{theorem}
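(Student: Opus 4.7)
The plan is to adapt Dirichlet's classical box argument. First I would form the $N+1$ points $P_q = (\{ql_1\}, \ldots, \{ql_u\}) \in [0,1)^u$ for $q = 0, 1, \ldots, N$, where $\{x\}$ denotes the fractional part modulo $1$. Partitioning the half-open cube $[0,1)^u$ into sub-hyperboxes of side length at most $N^{-1/u}$, I would then invoke the pigeon hole principle to locate two distinct indices $0 \le q_1 < q_2 \le N$ for which $P_{q_1}$ and $P_{q_2}$ share a common sub-box.

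With such a collision in hand, I would put $q = q_2 - q_1 \in \{1, \ldots, N\}$ and, for each $i$, set $p_i = \lfloor q_2 l_i \rfloor - \lfloor q_1 l_i \rfloor \in \Z$. Since $P_{q_1}$ and $P_{q_2}$ agree in every coordinate up to $N^{-1/u}$, one reads off $|q l_i - p_i| = |\{q_2 l_i\} - \{q_1 l_i\}| \le N^{-1/u}$, and so $\|q l_i\| \le N^{-1/u}$, as demanded.

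The main subtlety I foresee lies in the bookkeeping of the partition: when $N^{1/u}$ is not an integer, the minimal number of sub-boxes of side at most $N^{-1/u}$ tiling $[0,1)^u$ is $\lceil N^{1/u}\rceil^u \ge N$, so the bare count of $N+1$ points against $\lceil N^{1/u}\rceil^u$ cells need not force a collision, and naive pigeon hole stops short. The cleanest remedy is to recast this combinatorial step via Minkowski's first theorem on convex bodies, applied to the centrally symmetric box $C = [-N, N] \times [-N^{-1/u}, N^{-1/u}]^u \subset \R^{u+1}$ of volume $2^{u+1}$, against the unimodular lattice $\Lambda \subset \R^{u+1}$ spanned by $(1, l_1, \ldots, l_u)$ together with the last $u$ standard basis vectors. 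Because $\operatorname{vol}(C) = 2^{u+1} \operatorname{covol}(\Lambda)$, Minkowski's theorem furnishes a non-zero lattice vector of the form $(q, q l_1 - p_1, \ldots, q l_u - p_u) \in C$. The first coordinate $q$ must be non-zero (otherwise the remaining integer coordinates have absolute value at most $N^{-1/u} < 1$ for $N \ge 2$, forcing them all to vanish and contradicting non-triviality; the case $N = 1$ is trivial since $\|l_i\| \le \tfrac{1}{2}$ anyway), and after negating the vector if necessary one obtains the desired $1 \le q \le N$ with $\|q l_i\| \le N^{-1/u}$ for every $i$.
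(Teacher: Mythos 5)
Your proof is correct, and you have put your finger on a genuine subtlety. The paper offers no proof of this statement at all --- it merely cites Dirichlet and remarks that the result ``follows from the pigeon hole principle.'' As you observe, the naive box argument with $N+1$ points $P_0,\dots,P_N$ in $[0,1)^u$ gives, for integer $M$, two points in a common cube of side $1/M$ provided $M^u \le N$, and hence $1\le q\le N$ with $\|ql_i\| < 1/\lfloor N^{1/u}\rfloor$; but $1/\lfloor N^{1/u}\rfloor$ exceeds $N^{-1/u}$ unless $N^{1/u}\in\Z$, so the bound as stated is not reached. Your switch to Minkowski's convex body theorem repairs this cleanly: the lattice $\Lambda$ generated by $(1,l_1,\dots,l_u)$ and $e_2,\dots,e_{u+1}$ is unimodular, the compact box $C=[-N,N]\times[-N^{-1/u},N^{-1/u}]^u$ has volume exactly $2^{u+1}$, the compact-boundary form of Minkowski's theorem yields a nonzero lattice vector $(q,ql_1-p_1,\dots,ql_u-p_u)\in C$, your elimination of $q=0$ is correct (for $N\ge 2$ one has $N^{-1/u}<1$, and $N=1$ is trivial since $\|l_i\|\le\tfrac12$), and negating if necessary gives $1\le q\le N$. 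Two remarks on the comparison: (i) Minkowski's theorem is itself proved by a continuous pigeonhole (Blichfeldt's lemma), so your route is in spirit still a pigeonhole argument, only the ``sharp-constant'' version needed here; (ii) in the only place the paper actually invokes this theorem (Section~3, with $u=1$), the finite pigeonhole with $N$ intervals of length $1/N$ works without any subtlety, so the distinction you noticed never bites in the paper's application --- but for the theorem in the generality stated, your Minkowski argument is the right proof and the paper's gloss is a slight overstatement.
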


\noindent Dirichlet's theorem will be key for constructing a counterexample to Question~\ref{question:approxmatrixversion}\,\eqref{item:inf} in Section~\ref{section:onebyone}.  We are moreover interested in an inhomogeneous variant of this problem of simultaneous Diophantine approximation:  If additionally real numbers \(x_1, \ldots, x_u\) and \(\varepsilon > 0\) are given, does there exist \(q \in \Z\) with \(\|ql_i - x_i\| < \varepsilon\) for all \(i = 1, \ldots, u\)?  The answer cannot be an unconditional ``yes'' because there might be integers \(A_1, \ldots, A_u\) with the property that the linear combination \(\sum_{i=1}^u A_i l_i\) is an integer as well.  If the desired conclusion held true, we would get
\[ \| A_1 x_1 + \cdots + A_u x_u \| = \| A_1 (q l_1 - x_1) + \cdots + A_u (q l_u - x_u) \| \le (\lvert A_1 \rvert + \cdots + \lvert A_u \rvert) \varepsilon \]
which says that \(\sum_{i=1}^u A_i x_i\) is an integer, too.  The good news is that this necessary condition is also sufficient.

\begin{theorem}[Kronecker, 1884] \label{thm:Kronecker}
  Let \(l_1, \ldots, l_u\) and \(x_1, \ldots, x_u\) be real numbers.  The following are equivalent:
  \begin{enumerate}[(i)]
  \item For every \(\varepsilon > 0\) there is \(q \in \Z\) such that \(\|q l_i - x_i\| < \varepsilon\) for \(i = 1, \ldots, u\).
  \item For every \(u\)-tuple \((A_1, \ldots, A_u) \in \Z^u\) with the property that \(\sum_{i=1}^u A_i l_i\) is an integer, the linear combination \(\sum_{i=1}^u A_i x_i\) is an integer as well.
    \end{enumerate}
\end{theorem}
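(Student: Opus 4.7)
The implication (i) $\Rightarrow$ (ii) is the computation already displayed in the paragraph preceding the theorem. Given $(A_1, \ldots, A_u) \in \Z^u$ with $\sum_i A_i l_i \in \Z$, I would apply (i) with $\varepsilon' = \varepsilon/(|A_1|+\cdots+|A_u|+1)$ to produce $q \in \Z$ with $\|q l_i - x_i\| < \varepsilon'$ for each $i$. Since $q\sum_i A_i l_i \in \Z$,
\[
\bigl\|\textstyle\sum_i A_i x_i\bigr\| = \bigl\|\textstyle\sum_i A_i(q l_i - x_i)\bigr\| \le \bigl(\textstyle\sum_i|A_i|\bigr)\varepsilon' < \varepsilon,
\]
and letting $\varepsilon \to 0$ forces $\sum_i A_i x_i \in \Z$.

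The substantive direction is (ii) $\Rightarrow$ (i). My plan is to pass to the compact torus $T = \R^u/\Z^u$ and consider the closed subgroup
\[
H = \overline{\{q\cdot l \bmod \Z^u : q \in \Z\}} \subset T, \qquad l = (l_1, \ldots, l_u).
\]
Condition (i) is equivalent to saying that the class of $x = (x_1, \ldots, x_u)$ lies in $H$, so the task reduces to showing that hypothesis (ii) forces $x \bmod \Z^u \in H$. The natural tool is Pontryagin duality on $T$: the characters are $\chi_A(y) = e^{2\pi\ima\sum_i A_iy_i}$ for $A \in \Z^u$, and $\chi_A$ is trivial on $H$ if and only if it is trivial on the generator $l$, that is, iff $\sum_i A_i l_i \in \Z$. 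Since any proper closed subgroup of a compact abelian group equals the intersection of the kernels of its annihilating characters, this yields
\[
H = \bigl\{y \in T : \textstyle\sum_i A_i y_i \in \Z \text{ for every } A \in \Z^u \text{ with } \textstyle\sum_i A_i l_i \in \Z\bigr\},
\]
and hypothesis (ii) says exactly that the class of $x$ belongs to this set.

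The main obstacle is justifying this annihilator description of $H$, i.e.\ that any proper closed subgroup $K \subsetneq T$ is cut out by the characters trivial on it. For the torus this admits an elementary Fourier-analytic proof: given $p \notin K$, one chooses a continuous bump $f$ on $T$ which vanishes on a neighbourhood of $K$ but has $f(p) \neq 0$, averages it against the normalized Haar measure of $K$ to produce a $K$-invariant continuous function $g$ still satisfying $g(p) \neq 0$, and expands $g$ as a Fourier series on $T$; some Fourier coefficient $\widehat{g}(A)$ must then be nonzero, and the corresponding $\chi_A$ separates $K$ from $p$. Alternatively, one can proceed by induction on $u$ using that the only closed subgroups of $\R/\Z$ are the whole circle and the finite cyclic groups, though keeping track of the $\Q$-linear dependencies among the $l_i$ in the induction step is somewhat delicate.
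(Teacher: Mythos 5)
The paper does not prove Kronecker's theorem; it defers to Cassels (\cite{Cassels:Diophantine}, Theorem~IV, p.\,53), so there is no internal proof to compare against. Your argument is correct. The (i) $\Rightarrow$ (ii) direction is exactly the triangle-inequality computation the paper displays just before the theorem statement. For (ii) $\Rightarrow$ (i) you pass to $\mathbb{T}^u$ and use the fact that a closed subgroup of a compact abelian group is the common kernel of the characters annihilating it; this is a clean conceptual route, and it is in fact precisely the reformulation the paper itself invokes in the proof of Proposition~\ref{prop:middlengonedge}, where Kronecker's theorem is read as the statement $\overline{B_L} = \ker(\psi_A)$. By contrast, Cassels's proof is elementary and self-contained (an inductive reduction avoiding duality and Haar measure), so the two proofs trade machinery for brevity; both are standard. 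One small caveat about your Fourier-analytic sketch of the annihilator description: as written, pointwise evaluation of the Fourier series of a merely continuous $g$ is not justified; take $f$ smooth (so $g$ is smooth and its series converges absolutely), or argue via Fejér means. With that, if every $A$ with $\widehat{g}(A)\neq 0$ satisfied $\chi_A(p)=1$, then $g(p)=\sum_A\widehat{g}(A)\chi_A(p)=\sum_A\widehat{g}(A)=g(0)=0$, contradicting $g(p)\neq 0$, which completes the separation.
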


\noindent A proof can be found in \cite{Cassels:Diophantine}*{Theorem~IV, p.\,53}.  We remark that Kronecker's theorem is usually given in a slightly more general version where the real numbers \(l_i\) are replaced by linear forms but as of now we do not need this.  Kronecker's theorem will become handy for understanding torus orbits in Section~\ref{section:onebyone}.

\begin{theorem}[Gelfond--Schneider, 1934] \label{thm:Gelfond-Schneider}
Let \(\alpha_1, \alpha_2 \in \overline{\Q}\) be different from \(0\) and~\(1\) such that (some fixed values of) \(\log \alpha_1\) and \(\log \alpha_2\) are linearly independent over \(\Q\).  Then \(\log \alpha_1\) and \(\log \alpha_2\) are linearly independent over \(\overline{\Q}\).
\end{theorem}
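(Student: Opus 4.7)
The plan is to argue by contradiction along Gelfond's classical route, first reducing the statement to the familiar transcendence form. Assume \(\log \alpha_1\) and \(\log \alpha_2\) are \(\Q\)-linearly independent but \(\overline{\Q}\)-linearly dependent. Then there exists \(\beta \in \overline{\Q}\) with \(\log \alpha_2 = \beta \log \alpha_1\); the \(\Q\)-linear independence forces \(\beta \notin \Q\). Writing \(\alpha = \alpha_1\), the relation reads \(\alpha_2 = \alpha^{\beta}\) for the chosen branches, and the theorem reduces to: if \(\alpha \in \overline{\Q} \setminus \{0,1\}\) and \(\beta \in \overline{\Q} \setminus \Q\), then \(\alpha^{\beta}\) is transcendental. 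So I assume additionally that \(\alpha^{\beta} \in \overline{\Q}\), set \(K = \Q(\alpha,\beta,\alpha^{\beta})\) with \(d = [K:\Q]\), and aim at a contradiction.

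The first step is to build an \emph{auxiliary function}. For a large integer \(L\), consider the entire function
\[ F(z) = \sum_{i=0}^{L-1} \sum_{j=0}^{L-1} c_{i,j}\, e^{(i + j\beta)(\log \alpha) z} \]
with undetermined rational integer coefficients \(c_{i,j}\). For any positive integer \(m\) one has \(F(m) = \sum_{i,j} c_{i,j}\, \alpha^{mi}(\alpha^{\beta})^{mj} \in K\), and similarly for derivatives. With \(L^2\) unknowns I impose vanishing conditions \(F^{(k)}(m) = 0\) for \(m = 1,\ldots,M\) and \(k = 0, \ldots, T-1\), where the parameters are chosen so that the number of equations, \(MT\), is somewhat smaller than \(L^2/d\). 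Multiplying by a suitable common denominator turns the equations into a system over \(\Z\), and Siegel's lemma supplies a nontrivial integral solution \(c_{i,j}\) whose size is polynomially bounded in the parameters.

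The second step is a dichotomy. If \(F\) together with all derivatives of order \(<T\) vanishes at every positive integer, one exploits that the exponents \(i + j\beta\) are all distinct (since \(\beta\) is irrational) to conclude via a linear independence argument on quasi-polynomial exponentials that all \(c_{i,j}\) must be zero, contradiction. Otherwise, let \((m_0,k_0)\) be the first pair at which \(F^{(k_0)}(m_0) \neq 0\). Since this value is a nonzero element of \(K\) with controlled denominator and conjugate size, the product formula gives a lower bound of the shape \(|F^{(k_0)}(m_0)| \ge C_1^{-\Lambda}\) where \(\Lambda\) is a polynomial expression in \(L,M,T\). The third step is an analytic upper bound: apply the maximum modulus principle to \(F(z)/\prod_{m< m_0}(z-m)^{T}\) on a disk of radius \(R\) and estimate \(|F|\) on \(|z|=R\) by \(L^2 \max|c_{i,j}| \cdot e^{cLR}\). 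Choosing \(R\) large compared to \(M\) lets the Schwarz factor \((M/R)^{(m_0-1)T}\) crush the other terms and yields \(|F^{(k_0)}(m_0)| \le C_2^{-\Lambda}\), contradicting the arithmetic lower bound. The main obstacle is the delicate calibration of \(L\), \(M\), \(T\), \(R\) that makes Siegel's lemma nontrivial while simultaneously forcing the analytic estimate to beat the Liouville-type lower bound; this balancing act is exactly where the irrationality of \(\beta\) and the hypothesis \(\alpha \notin \{0,1\}\) enter, since they guarantee that the characters \(e^{(i+j\beta)(\log\alpha)z}\) are pairwise distinct.
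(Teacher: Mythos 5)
The paper does not prove this theorem; it is stated in the Preliminaries (Section~2.1) as a classical result of transcendence theory, given there purely as background motivation for the linear-forms-in-logarithms estimate (Theorem~\ref{thm:littlebaker}), which is the tool actually used later. So there is no proof in the paper to compare against, and your task here was really to reproduce a classical argument.

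Your sketch follows the standard auxiliary-function route (Gelfond's method, essentially the Schneider--Lang mechanism), and the outline --- reduction to the transcendence of \(\alpha^{\beta}\), Siegel's lemma, the dichotomy between identical vanishing and a first nonvanishing value, and the Schwarz-versus-Liouville comparison --- is the right skeleton. But the place you describe as the ``main obstacle'' is exactly where a genuine gap lies, and it is not just a matter of bookkeeping. The Liouville lower bound for a nonzero element of \(K\) carries a factor \(d = [K:\Q]\) in the exponent, while the Schwarz gain is of size roughly \((\text{radius})^{-MT}\); if you let \(M\) grow along with \(T\) (as your phrasing suggests) the \(d\)-weighted arithmetic lower bound and the analytic saving scale the same way and the contradiction never materialises. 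The calibration that actually works fixes the number of interpolation points \(M\) once and for all, of size proportional to \(d\), and lets only \(T\) (and hence \(L \sim \sqrt{dMT}\)) tend to infinity; equivalently, one takes \(k_0\) to be the least order \(\ge T\) of nonvanishing at \emph{some} of the \(M\) points, uses a Schwarz radius \(R \asymp \sqrt{k_0}\), and balances \(k_0 \log k_0\) terms. With a small fixed \(M\) this fails, which is why Gelfond's original proof needed an extra extrapolation (zero-propagation) step to manufacture more zeros than Siegel's lemma gives directly. You should also be careful with the degenerate branch of the dichotomy: vanishing to order \(<T\) at every positive integer does not by itself force the \(c_{i,j}\) to vanish via linear independence of characters; one needs a zero-density bound (Jensen's formula) for the exponential polynomial \(F\), and this is where the distinctness of the exponents \(i + j\beta\) really enters. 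Finally, note that \(F^{(k)}(m)\) itself is not in \(K\) --- you must divide by \((\log\alpha)^{k}\) to land in \(K\), which is harmless but should be said.
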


\noindent This theorem has the equivalent formulation that for \(\alpha_1, \alpha_2\) as above and additionally \(\alpha_2\) irrational, any value of \(\alpha_1^{\alpha_2}\) is transcendental.  As such, it yields the positive answer to Hilbert's seventh problem.  For applications to Diophantine equations not only the nonvanishing of the \emph{linear form in two logarithms}
\[ \Lambda = b_1 \log \alpha_1 + b_2 \log \alpha_2 \]
is important but also explicit lower bounds on \(\Lambda\) in terms of the heights and degrees of \(b_1, b_2 \in \overline{\Q}\) are relevant.  For our purposes it is enough to consider the special case where \(b_1\) and \(b_2\) are rational integers.

\begin{theorem} \label{thm:littlebaker}
Let \(\alpha_1, \alpha_2 \in \overline{\Q}\) be different from \(0\) and~\(1\) and let \(b_1, b_2\) be rational integers such that \(\Lambda \neq 0\).  Set \(B = \max \{\lvert b_1 \rvert, \lvert b_2 \rvert\}\).  Then there is a constant \(D\) depending only on the heights and degrees of \(\alpha_1\) and \(\alpha_2\) such that
  \[\lvert \Lambda \rvert > B^{-D}. \]
\end{theorem}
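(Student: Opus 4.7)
The plan is to adapt Gelfond's classical transcendence method, keeping track of constants to make the exponent $D$ explicit. The argument is by contradiction: assume $|\Lambda| \le B^{-D}$ for $D$ to be determined in terms of the heights $h(\alpha_i)$ and degrees $d_i = [\mathbb{Q}(\alpha_i):\mathbb{Q}]$. Choose integer parameters $L$ and $T$ as specific functions of $\log B$ and the arithmetic data of $\alpha_1,\alpha_2$, and construct an auxiliary function
\[\Phi(z) = \sum_{\lambda_1 = 0}^{L} \sum_{\lambda_2=0}^{L} p(\lambda_1,\lambda_2)\,\alpha_1^{\lambda_1 z}\alpha_2^{\lambda_2 z}\]
with rational integer coefficients $p(\lambda_1, \lambda_2)$, not all zero and of modest absolute value, arranged so that $\Phi(t) = 0$ for $t = 0, 1, \ldots, T$. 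The existence of such coefficients is furnished by Siegel's lemma, a pigeon-hole argument in the same spirit as Dirichlet's Theorem~\ref{thm:Dirichlet}, provided $(L+1)^2$ exceeds a suitable multiple of $T+1$ that accounts for the heights of the coefficients of the resulting linear system.

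The next step exploits the smallness of $\Lambda$ via extrapolation. Assuming without loss of generality $b_1 \ne 0$, write $\alpha_1^{z} = \alpha_2^{-(b_2/b_1)z}\exp(z\Lambda/b_1)$, so that $\Phi(z)$ essentially factors as a one-variable expression in $\alpha_2^z$ times a multiplicative correction of size $\exp(O(|z| \cdot |\Lambda|/|b_1|))$. Applying the Schwarz lemma on a disk of radius $R$ much larger than $T$, and using the $T+1$ known zeros, one shows that $|\Phi(t)|$ is extremely small for integers $t = T+1, \ldots, T'$. On the other hand, each $\Phi(t)$ is an algebraic number whose height is explicitly bounded in terms of $L$, $t$, and the heights of the $\alpha_i$; by the fundamental Liouville inequality, either $\Phi(t) = 0$ or $|\Phi(t)|$ is bounded below by a corresponding exponential. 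Comparing the two bounds and choosing $T'$ judiciously forces additional vanishing $\Phi(T+1) = \cdots = \Phi(T') = 0$.

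The final step iterates the extrapolation. After enough rounds, one produces so many zeros of $\Phi$ in a bounded disk that either $\Phi$ vanishes identically, contradicting the nontriviality of the $p(\lambda_1,\lambda_2)$ secured by a Vandermonde-style genericity argument at the construction stage, or some single value $\Phi(t)$ is simultaneously bounded above by the Schwarz estimate and below by Liouville in incompatible ways. Either outcome yields a contradiction, proving $|\Lambda| > B^{-D}$.

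The main obstacle is the bookkeeping in the extrapolation loop: each iteration introduces multiplicative factors involving $L$, the heights of the $\alpha_i$, and the current number of zeros, and one must maintain the inequalities that drive both the zero estimate and the Siegel construction simultaneously. Keeping the final exponent $D$ independent of $B$ requires tuning $L$ and $T$ as specific functions of $\log B$ so that the $B$-dependence cancels in the decisive inequality; optimizing this balance is what yields explicit values of $D$ in terms of $h(\alpha_i)$ and $d_i$ alone, as carried out in Baker's monograph \emph{Transcendence Theory} and refined in the subsequent work of Laurent, Mignotte, and Waldschmidt on linear forms in two logarithms.
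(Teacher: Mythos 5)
The paper does not give a proof of Theorem~\ref{thm:littlebaker}; it explicitly cites the result as a special case of Baker's theorem, pointing to \cite{Baker:LinearForms}*{Theorem~2} for the statement and the constant, and recalling Gelfond's earlier, weaker bounds. So there is no ``paper's proof'' against which to compare your attempt: the theorem is treated as a black box from the transcendence literature.

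As a sketch of the Gelfond--Baker method your outline gets the broad shape right (auxiliary exponential polynomial from Siegel's lemma, extrapolation via the Schwarz lemma against a Liouville lower bound, iteration with parameters tuned in $\log B$ so the $B$-dependence cancels), but two points as written would not close. First, the auxiliary function is made to vanish only at the integers $t=0,\dots,T$; in the actual argument one imposes vanishing to high order, $\Phi^{(j)}(t)=0$ for $j$ up to some $J$, and without these derivative conditions the parameter count in Siegel's lemma and the gain from the Schwarz lemma do not balance, so the extrapolation step has nothing to extrapolate. Second, appealing to a ``Vandermonde-style genericity argument at the construction stage'' to ensure $\Phi\not\equiv 0$ misidentifies where the difficulty lies: nontriviality of the coefficient vector is automatic from Siegel's lemma, and what is actually needed is an independent zero estimate for exponential polynomials (in the style of Tijdeman or Masser) bounding the total order of vanishing, so that the iterated extrapolation must eventually terminate in a contradiction rather than run forever. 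Both issues are repaired in the standard sources you mention (Baker's monograph; Laurent--Mignotte--Nesterenko for two logarithms), so the plan is salvageable, but as stated it is a description of the method rather than a proof, and in any case goes well beyond what the paper itself undertakes.
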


\noindent It is hard to track down where exactly in the involved history of bounding logarithms in linear forms the theorem in this formulation was included for the first time.  Gelfond already gave the weaker estimate \(\lvert \Lambda \vert > C e^{-(\log B)^\kappa}\) with improvements on the constant \(\kappa \) over two decades \citelist{\cite{Gelfond:first} \cite{Gelfond:second} \cite{Gelfond:third}}.  But the above theorem is definitely a special case of Baker's celebrated theorem from 1966-1967, see \cite{Baker:LinearForms}*{Theorem~2} for a strong version and information on the constant \(D\).  Let us refer to any \(D = D(\alpha_1, \alpha_2) \ge 1\) satisfying the inequality of the theorem as a \emph{Baker constant} of the pair \((\alpha_1, \alpha_2)\).  Theorem~\ref{thm:littlebaker} will be crucial for the proof of Theorem~\ref{thm:liminfpositive} in Section~\ref{section:liminfpositive}.

\subsection{Nets and cluster points} \label{subsection:nets}

The finite index normal subgroups of a group and thereby the finite Galois coverings of a space are natural examples of \emph{directed sets}.  A set \(I\) is called \emph{directed} if it comes with a reflexive, transitive binary relation ``\(\le\)'' such that any two elements \(a, b \in I\) have a common \emph{upper bound} \(c \in I \) with \(a \le c\) and \(b \le c\).  A function from a directed set \((I, \le)\) to a topological space \(X\) is called a \emph{net} in \(X\).  If \((x_i)_{i \in I}\) is a net in \(X\), then a point \(c \in X\) is called a \emph{cluster point} if for every neighborhood \(U\) of \(c\) and for every \(i \in I\) there exists \(j \ge i\) with \(x_j \in U\).  The set of cluster points is closed.  In the special case \(X = \R\) we define \(\limsup_{i \in I} x_i\) and \(\liminf_{i \in I} x_i\) as the largest and the smallest cluster point, respectively.  Here, we also allow the values \(\pm \infty\) as cluster points in the natural way, so that both \(\limsup_{i \in I} x_i\) and \(\liminf_{i \in I} x_i\)  are guaranteed to exist.  If the latter two are equal, we say the net is \emph{convergent} and write \(\lim_{i \in I} x_i\) for the common value.  Alternatively, we clearly have the description
\[ \liminf_{i \in I} x_i = \sup_{i \in I} \inf_{i \le j} x_j \quad \text{and} \quad \limsup_{i \in I} x_i = \inf_{i \in I} \sup_{i \le j} x_j. \]

For the set of natural numbers \(\N\) we will have occasion to deal with two different directions.  One is the usual total order ``a \(\le\) b'' in which all the above notions reduce to the familiar ones from sequences.  The other is divisibility ``\( a \mid b\)'' and arises when we identify \(\N\) with the full residual system \(F\) of the group \(\Z\).  We should clarify the relation between the resulting upper and lower limits in order to dispel any possible confusion from the very start.

\begin{lemma} \label{lemma:sequencenet}
Let \(a \colon \N \rightarrow \R\) be a function which we interpret either as the sequence \((a_i)_{i \ge 0}\) or as the net \((a_i)_{i \in F}\).  Then
\[ \liminf_{i \rightarrow \infty} a_i \le \liminf_{i \in F} a_i \le \limsup_{i \in F} a_i \le \limsup_{i \rightarrow \infty} a_i \]
where each inequality can be strict.
\end{lemma}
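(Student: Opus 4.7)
The plan is to reduce the whole statement to a single observation: every cluster point of the net $(a_i)_{i \in F}$ is also a cluster point of the sequence $(a_i)_{i \ge 0}$. Granting this, the middle inequality becomes immediate, since on the nonempty compact cluster set in $[-\infty,\infty]$ the smallest element is bounded above by the largest. For the outer inequalities, if $C_{\mathrm{net}} \subseteq C_{\mathrm{seq}}$ denote the respective cluster sets, I would just note that $\min C_{\mathrm{seq}} \le \min C_{\mathrm{net}}$ and $\max C_{\mathrm{net}} \le \max C_{\mathrm{seq}}$, which produces precisely the two outer estimates.

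To prove the key containment, I would identify $F$ with $\N_{>0}$ ordered by divisibility via $n \leftrightarrow n\Z$, so that $j \le_F i$ means $j \mid i$. Let $c$ be a cluster point of the net, fix a neighborhood $U$ of $c$, and fix $N \in \N$. Applying the definition of net cluster point to the element $N \in F$ and the neighborhood $U$ yields some $i \in F$ with $N \mid i$ and $a_i \in U$. Since $N \mid i$ and $N, i \ge 1$, this forces $i \ge N$, so $c$ qualifies as a cluster point of the sequence.

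For the strictness claims I would use elementary examples. Taking $a_i = 1$ if $i$ is prime and $a_i = 0$ otherwise gives $\limsup_{i \to \infty} a_i = 1$, whereas $\limsup_{i \in F} a_i = 0$ because already the choice $j = 4$ forces $\sup_{4 \mid i}\, a_i = 0$; negating the nonzero value delivers strictness of the leftmost inequality in the same way. For the middle, a function like $a_i = (-1)^{\Omega(i)}$, where $\Omega$ counts prime factors with multiplicity, has both $+1$ and $-1$ as net cluster points (multiples of any $j$ contain elements of both parities of $\Omega$), so that $\liminf_{i \in F} a_i < \limsup_{i \in F} a_i$.

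I do not expect any real obstacle: the argument is essentially bookkeeping, with the entire content of the outer inequalities encoded in the single implication ``$N \mid i$ with $N, i \ge 1$ implies $i \ge N$''. This makes divisibility a strictly stronger cofinality condition on $\N_{>0}$ than the usual order, which is exactly why it can only shrink the set of cluster points.
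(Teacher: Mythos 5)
Your argument is correct and essentially coincides with the paper's proof: the single key observation in both is that every cluster point of the net $(a_i)_{i\in F}$ is a cluster point of the sequence $(a_i)_{i\ge 0}$, because $N\mid i$ with $N,i\ge 1$ forces $i\ge N$; this inclusion of cluster sets yields the two outer inequalities, and the middle one is trivial. The only genuine difference is in the examples for strictness: for the outer inequalities the paper takes $a_i=(-1)^i$ and its negative, whereas you use the indicator of the primes and its negative (both work, and for both one must check that the net limits collapse to the non-prime/odd value); for the middle inequality you both use $(-1)^{\Omega(i)}$ (the paper writes ``number of prime factors'' without specifying multiplicity, but either reading is fine since both are completely or at least additive enough to hit both parities along multiples of any fixed $j$). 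So this is the same approach with cosmetically different counterexamples.
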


\begin{proof}
Let \(c \in \R\) be a cluster point of the net \((a_i)_{i \in F}\).  By definition this means that for all \(\varepsilon > 0\) and for all \(k \in F = \N\) there is \(l \in \N\) such that \(\lvert a_{kl} - c \rvert < \varepsilon\).  In particular, we obtain a subsequence \((a_{i_k})_{k \ge 0}\) of \((a_i)_{i \ge 0}\) which converges to \(c\).  Thus any cluster point of the net \((a_i)_{i \in F}\) is a cluster point of the sequence \((a_i)_{i \ge 0}\).  This gives the two outer inequalities of the lemma.

Consider the example \(a_i = (-1)^i\).  Then the leftmost inequality is strict for \((a_i)\) and the rightmost inequality is strict for \((-a_i)\).  To see that the middle inequality can be strict, consider \(a_i = (-1)^{N_i}\) where \(N_i\) is the number of prime factors of \(i\).
\end{proof}

\section{The case of a single Laurent polynomial} \label{section:onebyone}

\noindent In this section we give a proof of Theorem~\ref{thm:approxvcycmatrixversion} for \(r = s = 1\).  Consider an element \(A \in M(1,1; \C[\Z])\).  The full residual system is given by \(G_i = i\Z\) for \(i \in F = \N\) directed by divisibility.  We identify the group ring \(\C[\Z]\) with the ring of Laurent polynomials \(\C[z, z^{-1}]\).  Moreover, Fourier transform identifies the Hilbert space \(\ell^2(\Z)\) with \(L^2(S^1, \mu)\), the space of square integrable complex valued functions on the unit circle with respect to the probability Haar measure~\(\mu\), factoring out those function which vanish almost everywhere.

\subsection{Two examples}  \label{subsection:twoexamples}  Let us sneak up on the proof by considering the first nontrivial case \(A = (p(z))\) with \(p(z) = z-1\).    The operator \(r^{(2)}_{AA^*}\) is then given by multiplying functions with \(|z-1|^2\).  We have \(\alpha^{(2)}(A) = 1\) as can be seen from the proof of \cite{Lueck:L2Invariants}*{Lemma~2.58, p.\,101}.  By finite Fourier transform, the matrices \(A_iA_i^* \in M(1,1;\C[\Z / i\Z]) \subset M(i,i; \C)\) are diagonal with entries \(|\zeta_i^k - 1|^2\) where \(\zeta_i\) is one of the two primitive \(i\)-th roots of unity that enclose the smallest angle with \(1 \in \C\), where \(k = 0, \ldots, i-1\) and say \(i \ge 3\).  Thus we have \(\sigma^+(A_i) = |\zeta_i - 1| = 2 \sin(\frac{\pi}{i})\) and \(m^+(A_i) = 2\).  By L'H\^opital's rule and substituting \(x = \frac{\pi}{i}\) the ordinary limit of the sequence \((\alpha(A_i))_{i \ge 0}\) is
\[ \lim_{i\rightarrow\infty} \alpha(A_i) = \lim_{i\rightarrow\infty} \frac{\log\left(\frac{2}{i}\right)}{\log\left(2\sin\left(\frac{\pi}{i}\right)\right)} = \lim_{i\rightarrow\infty} \frac{i \tan\left(\frac{\pi}{i}\right)}{\pi} = \lim_{x \rightarrow 0^+} \frac{\tan(x)}{x} = 1. \]
By Lemma~\ref{lemma:sequencenet} the net \((\alpha(A_i))_{i \in F}\) has limit \(\lim_{i \in F} \alpha(A_i) = 1\) as well.  So in this simplest possible case of Question~\ref{question:approxmatrixversion} the answer is ``yes'' for both part \eqref{item:sup} and part \eqref{item:inf}.

Now we can already give the counterexample for Question~\ref{question:approxmatrixversion}\,\eqref{item:inf}.  Consider \(A = (p(z))\) with the polynomial \(p(z) = 5z^2 -6z + 5\).  The roots of \(p(z)\) are given by \(a = \frac{3}{5} + \frac{4}{5} \ima\) and its complex conjugate.  Let \(l \in (0,1)\) be determined by \(a = e^{2 \pi \ima l}\).  Since \(a\) is not a root of unity, the number \(l\) is irrational.  Let \(K\) be a positive integer.  Then Theorem~\ref{thm:Dirichlet} provides us with a sequence of positive integers \((i_j)\) such that \(0 < \|i_j K l\| \le \frac{1}{i_j}\).  This implies that we can find a \(K i_j\)-th root of unity \(\xi_{K i_j}\) with \(0 < |\xi_{K i_j} - a| \le 2 \sin(\frac{\pi}{K i_j^2}) \le \frac{2 \pi}{K i_j^2}\).  For sufficiently large \(j\) we obtain
\[ \sigma^+(A_{K i_j}) \le |p(\xi_{K i_j})| = 5 |\xi_{K i_j} - \overline{a}| |\xi_{K i_j} - a| \le 5 \cdot 2 \cdot \frac{2 \pi}{K i_j^2} \]
which gives
\[ \alpha(A_{K i_j}) \le \frac{\log\left(\frac{2}{K i_j}\right)}{\log\left(\frac{20 \pi}{K i_j^2}\right)} \]
hence \(\inf_{K \mid i} \alpha(A_i) \le \frac{1}{2}\).  Thus \(\liminf_{i \in F} \alpha(A_i) = \sup_{K \in F} \inf_{K | i} \alpha(A_i) \le \frac{1}{2}\) whereas \(\alpha^{(2)}(A) = 1\).

\subsection{General Laurent polynomials}  \label{subsection:generallaurentpolynomials}  Still let \(G = \Z\) but now let \(A = (p(z))\) for a general Laurent polynomial
\[ p(z) = c z^k \prod_{r=1}^s (z-a_r)^{\mu_r} \]
with \(c \in \C\), \(k \in \Z\) and the distinct roots \(a_r \in \C^*\) of \(p(z)\) of multiplicities \(\mu_r\).  We rearrange the roots of \(p(z)\) so that \(a_1, \ldots, a_u \in S^1\) and \(a_{u+1}, \ldots, a_s \notin S^1\) for some \(0 \le u \le s\).  By \cite{Lueck:L2Invariants}*{Lemma~2.58, p.\,100} and its proof we have that \(\alpha^{(2)}(p(z)) = \frac{1}{\max\{\mu_1, \ldots, \mu_u\}}\) if \(u \ge 1\) and \(\alpha^{(2)}(p(z)) = \infty^+\) otherwise.

To compute the alpha number of \(A_i\) in the case \(u \ge 1\), note that the singular values of \(A_i \in M(i,i; \C)\) are given by \(|p(\zeta_i^k)| = |c| \prod_{r=1}^s |\zeta_i^k - a_r|^{\mu_r}\) for \(k = 0, \ldots, i-1\).  Let \(d > 0\) and \(D > 0\) be given by the minimum and the maximum, respectively, of \(\prod_{r=u+1}^s |z-a_r|^{\mu_r}\) for \(z \in S^1\).  Let \(r_0 \le u\) be an index such that \(a_{r_0}\) is a root on the unit circle of maximal multiplicity \(\mu_0 = \mu_{r_0}\).  If \(a_{r_0}\) is an \(i\)-th root of unity, we have \(|\zeta_i^k - a_{r_0}| = 2 \sin(\frac{\pi}{i})\) where \(\zeta_i^k\) is either of the two \(i\)-th roots of unity adjacent to \(a_{r_0}\).  If \(a_{r_0}\) is not an \(i\)-th root of unity, then it lies in the open circle segment above one particular edge of the regular \(i\)-gon so that \(|\zeta_i^k - a_{r_0}| < 2 \sin(\frac{\pi}{i})\) for either of the two roots of unity \(\zeta_i^k\) spanning the segment.  In any case, we obtain that there exists \(0 \le k \le i-1\) with
\[ |p(\zeta_i^k)| \le |c| D 2^\mu \left(\sin\left(\frac{\pi}{i}\right)\right)^{\mu_0} \]
where \(\mu = \mu_1 + \cdots + \mu_u\).  Let us merge the constants to  \(K = |c| D 2^\mu\).  Since \(\sigma^+(A_i) \le |p(\zeta_i^k)|\) and \(m^+(A_i) \ge 1\), we have
\[  \alpha(A_i) \le  \frac{\log\left(\frac{1}{i}\right)}{\log\left(K \left(\sin\left(\frac{\pi}{i}\right)\right)^{\mu_0}\right)} = \frac{\log\left(\frac{1}{i}\right)}{\mu_0 \log\left(K^{\frac{1}{\mu_0}} \sin\left(\frac{\pi}{i}\right)\right)}. \]
A computation similar to the one in Section~\ref{section:preliminaries} gives \(\limsup_{i \rightarrow \infty} \alpha(A_i) \le \frac{1}{\mu_0}\), thus also \(\limsup_{i \in F}  \alpha(A_i) \le \frac{1}{\mu_0}\) by Lemma~\ref{lemma:sequencenet}.  To show equality (in both cases) it remains to identify \(\frac{1}{\mu_0}\) as a cluster point of the net \((\alpha(A_i))_{i \in F}\).  This is the tricky part.

Note that the notation \(\|x\|\) from Section~\ref{section:preliminaries} still makes sense and is well-defined for \(x \in \mathbb{R} / \mathbb{Z} = \mathbb{T}\).  The same two inequalities from before hold true and even better, the term \(\|x - y\|\) for \(x, y \in \mathbb{T}\) defines a metric inducing the given topology on \(\mathbb{T}\).

\begin{proposition} \label{prop:middlengonedge}
For all points \(z_1, \ldots, z_u  \in S^1 \subset \C\) on the circle there is \(0<R<\frac{1}{2}\) such that for each positive integer \(K\) there are infinitely many positive integers \(i_j\) such that for all \(t = 1, \ldots, u\) and for all \( k = 1, \ldots, K i_j\) either
\[ z_t = \zeta^k_{K i_j} \quad \text{or} \quad |z_t-\zeta^k_{K i_j}| \ge 2 \sin \left(\frac{R\pi}{K i_j}\right) \]
where \(\zeta_{K i_j}\) is a fixed primitive \(K i_j\)-th root of unity.
\end{proposition}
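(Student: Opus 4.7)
The plan is to translate the problem into one about simultaneous Diophantine approximation on the torus \(\mathbb{T} = \R/\Z\) and apply Kronecker's theorem. Write each point as \(z_t = e^{2\pi \ima l_t}\) with \(l_t \in [0,1)\). A direct trigonometric computation shows that the minimal chord distance \(\min_k \lvert z_t - \zeta_N^k \rvert\) equals \(2\sin(\pi \|N l_t\|/N)\), and vanishes precisely when \(N l_t \in \Z\), that is, when \(z_t\) is an \(N\)-th root of unity. With \(N = K i_j\), the proposition therefore reduces to finding \(R > 0\) such that for every positive integer \(K\) there are infinitely many positive integers \(i_j\) with the property that, for each \(t\), either \(K i_j\, l_t \in \Z\) or \(\|K i_j\, l_t\| \ge R\).

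First I would separate the indices into \(T_{\text{rat}} = \{t : l_t \in \Q\}\) and its complement \(T_{\text{irr}}\). Writing \(l_t = p_t/q_t\) in lowest terms for \(t \in T_{\text{rat}}\) and setting \(Q = \lcm_{t \in T_{\text{rat}}} q_t\), the value \(\|N l_t\|\) always lies in \(\tfrac{1}{q_t}\Z\), so the rational indices cause no trouble as long as \(R \le 1/Q\). The substance of the proposition is in the irrational indices, where \(N l_t \notin \Z\) for any \(N \in \Z\), and we must secure \(\|N l_t\| \ge R\) simultaneously for all \(t \in T_{\text{irr}}\) for infinitely many \(N \in K \Z_{>0}\).

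For this I would set \(\vec{l} = (l_t)_{t \in T_{\text{irr}}} \in \mathbb{T}^n\) (with \(n = |T_{\text{irr}}|\)) and analyze the closure \(H_K \subseteq \mathbb{T}^n\) of the orbit \(\{i K \vec{l} : i \in \Z\}\). By Kronecker's theorem, \(H_K\) is a closed subgroup, and I claim its identity component \(H_0\) is independent of \(K\): the relation lattice \(A_K = \{\vec{A} \in \Z^n : \vec{A} \cdot K \vec{l} \in \Z\}\) contains \(A_1\) with finite index (the quotient embeds into \(\tfrac{1}{K}\Z / \Z\)), so both lattices have the same rank, which forces \(\dim (H_K)_0 = n - \rank(A_K)\) to be \(K\)-independent; together with \((H_K)_0 \subseteq H_0\) this gives \((H_K)_0 = H_0\). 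The crucial geometric fact is that for each \(t \in T_{\text{irr}}\) the coordinate projection \(H_0 \to \mathbb{T}\), \(\vec{x} \mapsto x_t\), is surjective: its image is a closed connected subgroup of \(\mathbb{T}\), hence either trivial or all of \(\mathbb{T}\), and triviality would mean that \(e_t\) lies in the \(\Q\)-span of \(A_1\), which by Kronecker would force \(l_t\) to be rational, contradicting \(t \in T_{\text{irr}}\).

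Once coordinate surjectivity is secured, the proof concludes with a measure count. Pushing forward normalized Haar measure on \(H_K\) along the \(t\)-th coordinate produces normalized Haar on \(\mathbb{T}\), so \(\{\vec{x} \in H_K : \|x_t\| < R\}\) has measure \(2R\), and the forbidden set \(\bigcup_{t \in T_{\text{irr}}} \{\|x_t\| < R\}\) has measure at most \(2 R n\). Choosing \(R < \min(1/Q,\, 1/(2n),\, 1/2)\) makes this strictly less than \(1\), so the complement is a nonempty open subset of \(H_K\); since the orbit of \(K \vec{l}\) is dense in \(H_K\) (even equidistributed, by Weyl), this open set is visited by infinitely many positive multiples \(i K \vec{l}\), yielding the required sequence of \(i_j\)'s together with a single \(R\) that serves every \(K\). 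I expect the main obstacle to be the surjectivity of the coordinate projections, which is where Kronecker's theorem genuinely enters, combined with the accompanying \(K\)-independence of \(H_0\) that is what makes one \(R\) suffice uniformly.
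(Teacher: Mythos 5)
Your proposal is correct and closely tracks the paper's proof in its core structure: both use Kronecker's theorem to identify the closure of the $\Z$-orbit of a torus point, and both hinge on the observation that the identity component of this closure is unaffected by passing from $\vec{l}$ to $K\vec{l}$ (you argue via equality of ranks of the relation lattices $A_1 \subseteq A_K$, the paper via divisibility of the limit torus; these are essentially the same fact). Where you diverge is in locating the good set and choosing $R$. The paper sets $R$ just below $\min_t \|mL_t\|$ over the indices with $mL_t \neq 0$: since $[mL]$ itself lies in $\overline{B_L}^0$, the interior of the cube $K_R$ meets the orbit closure for free, with no measure count needed. You instead split rational from irrational coordinates up front and then show the coordinate projections from $H_0$ are surjective --- a cleaner way to identify the ``active'' directions than the paper's subtorus $\mathbb{T}_{mL}$ --- and obtain the nonempty good set by a Haar-measure pigeonhole with $R < 1/(2n)$. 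That is a genuinely different (and nicely explicit) way to produce $R$. One slip to fix: the complement of the open forbidden set $\bigcup_t \{\|x_t\| < R\}$ is \emph{closed}, not open. Either note that its boundary $\bigcup_t \{\|x_t\| = R\}$ has Haar measure zero in $H_K$ (by the very pushforward argument you set up), so the interior $\bigcap_t \{\|x_t\| > R\}$ is still nonempty and open; or replace the slabs by the closed sets $\{\|x_t\| \le R\}$ from the start. With that repair, density of the forward orbit already suffices --- the appeal to Weyl equidistribution is correct but stronger than needed.
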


\begin{proof}
For what comes next it is preferable to think of the \(u\)-dimensional torus as the additive group \(\mathbb{T}^u = (\mathbb{R} / \mathbb{Z})^u\).  Accordingly, let us change the notation for the point \((z_1, \ldots, z_u)\) in \((S^1)^u\) to \(L = (L_1, \ldots, L_u)\) in \(\mathbb{T}^u\) so that \((z_1^n, \ldots, z_u^n)\) corresponds to \(nL = (nL_1, \ldots, nL_u)\).  The point \(L\) defines a homomorphism of \(\Z\)-modules (abelian groups) \(\varphi_L \colon \Z^u \rightarrow \mathbb{T} = \R/\Z\) sending \((a_1, \ldots, a_u) \in \Z^u\) to \(\sum_{j=1}^u a_j L_j \in \mathbb{T}\).  Let \(\{A_1, \ldots, A_k \} \subset \Z^u\) be a basis of the free submodule \(\ker \varphi_L\) of \(\Z^u\).  Considering these basis elements as the columns of a \((u \times k)\)-matrix \(A\), they define a homomorphism \(\R^u \rightarrow \R^k\) where we write elements of \(\R^u\) and \(\R^k\) as row vectos and multiply them from the right with \(A\).  This homomorphism descends to a homomorphism \(\psi_A \colon \mathbb{T}^u \rightarrow \mathbb{T}^k\).  Theorem~\ref{thm:Kronecker} says precisely that the \(\Z\)-orbit \( B_L = \{ nL \in \mathbb{T}^u \, | \, n \in \Z \} \) of \(L\) in the \(u\)-torus \(\mathbb{T}^u\) has closure \(\overline{B_L} = \ker(\psi_A)\).  It follows from this description that \(\overline{B_L} \cong \mathbb{T}^v \oplus \Z / m \Z\) for some \(m \ge 1\), compare also \cite{Abbaspour-Moskowitz:Basic}*{Corollary~4.2.5, p.\,209}.  Here the dimension \(v\) is one less than the dimension of the \(\Q\)-vector space generated by \(1, \widetilde{L_1}, \ldots, \widetilde{L_u}\) where each \(\widetilde{L_t}\) is some lift of \(L_t\) from \(\mathbb{T}\) to \(\R\).  Therefore \(v\), depending on \(L\), can take any value between zero and \(u\).  For the moment, let us assume \(v \ge 1\).  Since the quotient \(\overline{B_L} / \overline{B_L}^0 \cong \Z / m \Z\) by the unit component is generated by \(L + \overline{B_L}^0\), it follows that \(\overline{B_{mL}} = \overline{B_L}^0 \cong \mathbb{T}^v\).  Let
\[ \mathbb{T}_{mL} = \{ (x_1, \ldots, x_u) \in \mathbb{T}^u \,|\, x_t = [0] \text{ if } mL_t = [0] \} \]
be the unique minimal subtorus obtained from \(\mathbb{T}^u\) by setting fixed coordinates to zero under the side condition that it still contains \(\overline{B_L}^0\).  It is then of course necessary that \(1 \le v \ \le \ l = \dim \mathbb{T}_{mL} \ \le \ u\).  In what follows we will delete the zero coordinates from \(\mathbb{T}_{mL}\).
We can choose \(0 < R < \frac{1}{2}\) so small that the interior of the centered cube
\[  K_R = \{ (x_1, \ldots, x_l) \in \mathbb{T}_{mL} \,|\, {\textstyle \|x_t\| \ge R \text{ for all } t = 1, \ldots, l} \}. \]
contains \([mL]\) and therefore intersects \(\overline{B_L}^0\) in the nonempty set \(U_L\).  Next we claim that for every nonzero \(K \in \Z\) we have \(\overline{B_{KmL}} = \overline{B_{mL}} = \overline{B_L}^0\).

Indeed, the inclusion \(\overline{B_{KmL}} \subset \overline{B_{mL}}\) is clear.  For the other inclusion we note that \(\overline{B_{mL}} = \overline{B_L}^0 \cong \mathbb{T}^v\) is a torus, hence is divisible.  Thus for given \(x \in \overline{B_{mL}}\) and \(\varepsilon > 0\) there is \(y =(y_1, \ldots, y_u) \in \overline{B_{mL}}\) such that \(Ky = x\) and there is \(N \in \Z\) such that \(\|N m L_t - y_t\| < \frac{\varepsilon}{\lvert K \rvert}\) for all \(t = 1, \ldots, u\).  It follows that \(\|N (KmL_t) - x_t\| = \|K(NmL_t - y_t)\| < \varepsilon\), hence \(x \in \overline{B_{KmL}}\).

Since \(U_L\) is open in \(\overline{B_L}^0\), it contains infinitely many \(\Z\)-translates of \(K m L\).  Note moreover that \(U_L = -U_L\), so we can pick a sequence \(i_j\) of positive integer multiples of \(m\) such that \(i_j K L \in U_L\) for all \(j\).

By construction we have that for each \(i_j\) either \(K i_j L_t = [0]\), meaning that \(z_t \in S^1\) is a \(K i_j\)-th root of unity, or \(\|K i_j L_t \| \ge R\), meaning that \(z_t\) encloses an angle of at least \(\frac{2 \pi R}{K i_j}\) with any \(K i_j\)-th root of unity.  This gives the assertion for \(v \ge 1\).  In case \(v = 0\) we have \(L_t \in \Q / \Z\) for all \(t = 1, \ldots, u\) or in other words each \(z_t \in S^1\) is some \(k_t\)-th root of unity.  In that case setting \(i_j = j \lcm (k_1, \ldots, k_u) \) does the trick for arbitrary \(0 < R < \frac{1}{2}\).
\end{proof}

\noindent To see that \(\frac{1}{\mu_0}\) is a cluster point of the net \((\alpha(A_i))_{i \in F}\), for any given positive integer \(K\) we have to construct a sequence of positive integers \(i_j\) such that \(\lim_{j \rightarrow \infty} \alpha(A_{K{i_j}}) = \frac{1}{\mu_0}\).  So let the number \(0 < R < \frac{1}{2}\) and the sequence \((i_j)\) be specified by \(a_1, \ldots, a_u \in S^1\) and by \(K\) according to Proposition~\ref{prop:middlengonedge}.  We now ask for a lower bound on \(\sigma^+(A_{K i_j})\).  Let \(\delta = \min \{ \frac{1}{2}, \eta \}\) where \(\eta\) is the minimum of the pairwise Euclidean distances of the points \(\{a_1, \ldots, a_u\} \subset S^1\).  Let \(\xi_{K i_j}\) be (one of) the \(K i_j\)-th root(s) of unity for which \(\sigma^+(A_{K i_j}) = \lvert p(\xi_{K i_j}) \rvert\).  For sufficiently large \(j\), there must be one and only one root \(a_r\) on \(S^1\) within the open \(\delta\)-ball around \(\xi_{K i_j}\), where \(r = r(j)\) depends on \(j\).    So if \(a_{r(j)}\) is not a \(K i_j\)-th root of unity, we have \(\lvert \xi_{K i_j} - a_{r(j)} \rvert \ge 2 \sin \frac{R \pi}{K i_j}\) and if \(a_{r(j)}\) is a \(K i_j\)-th root of unity, we even have \(\lvert \xi_{K i_j} - a_{r(j)} \rvert \ge 2 \sin \frac{\pi}{K i_j}\).  For sufficiently large \(j\), this gives
\begin{align*}
|p(\xi_{K i_j})| \ge |c| d \delta^{\mu -\mu_{r(j)}} \left(2\sin\left(\frac{R\pi}{K i_j}\right)\right)^{\mu_{r(j)}} \ge |c| d \delta^\mu 2^{\mu_0} \left(\sin\left(\frac{R\pi}{K i_j}\right)\right)^{\mu_0}.
\end{align*}
Since \(p\) is a polynomial, the function \(t \mapsto \lvert p(e^{2\pi \ima t})\rvert\) is strictly monotonic on small half-open intervals starting at the zeros and the function is bounded from below outside these intervals.  Thus for large \(j\) we  have \(m^+(A_{K i_j}) \le 2u\).  (Note that we use the symbol ``\(\ima\)'' for the imaginary unit whereas the symbol ``\(i\)'' is reserved for indices.)  The same computation as above shows \(\liminf_{j\rightarrow \infty} \alpha(A_{K i_j}) \ge \frac{1}{\mu_0}\), thus \(\lim_{j\rightarrow \infty} \alpha(A_{K i_j}) = \frac{1}{\mu_0}\).  This answers Question~\ref{question:approxmatrixversion}\,\eqref{item:sup} affirmatively for the case \(G = \Z\) and \(r = s = 1\).

\section{The case of a matrix of Laurent polynomials}
\label{section:rbys}

\noindent For a general matrix \(A \in M(r,s; \C[\Z])\) with arbitrary \(r, s\) we notice that the ring of Laurent polynomials \(\C[\Z]\), being a localization of the polynomial ring \(\C[z]\), is a principal ideal domain.  Therefore \(A\) can be transformed into \emph{Smith normal form}.  This means there are invertible matrices \(S \in M(r,r; \C[\Z])\) and \(T \in M(s,s; \C[\Z])\) such that \(SAT\) is an \((r \times s)\)-matrix of block form \(\left( \begin{smallmatrix} P & 0 \\ 0 & 0 \end{smallmatrix} \right)\) where \(P\) is a diagonal matrix with entries \(p_1(z), \ldots, p_k(z)\).

The (Laurent) polynomials \(p_1(z), \ldots, p_k(z)\) are called the \emph{invariant factors} and satisfy the relation \(p_l \mid p_{l+1}\).  Multiplying \(S\) or \(T\) by a diagonal matrix with nonzero constant polynomials as entries, if need be, we can and will additionally assume that \(|p_{l+1}(z)| \le |p_l(z)|\) for all \(z \in S^1\) and \(l=1, \ldots, k-1\).  By \cite{Lueck:L2Invariants}*{Lemma~2.11\,(9), p.\,77, and Lemma~2.15\,(1), p.\,80} we get
\[ \alpha^{(2)}(A) = \alpha^{(2)}(SAT) = \min_{l=1,\ldots, k} \{ \alpha^{(2)}(p_l(z)) \} = \alpha^{(2)}(p_k(z)). \]
The last equality holds because the maximal multiplicity of a root on the unit circle can only increase from \(p_l\) to \(p_{l+1}\).  The following proposition thus reduces Question~\ref{question:approxmatrixversion} for the \((r \times s)\)-matrix \(A\) to the same question for the \((1 \times 1)\)-matrix \((p_k(z))\).  The latter was treated in the preceding section.

\begin{proposition} \label{prop:rbystoonebyone} Suppose \(\alpha^{(2)}(A) < \infty^+\).  Then we have
  \[ \liminf_{i \in F} \alpha(A_i) = \liminf_{i \in F} \alpha(p_k(z)_i) \ \ \text{and} \ \ \limsup_{i \in F} \alpha(A_i) = \limsup_{i \in F} \alpha(p_k(z)_i) \]
and the same statement holds replacing ``\(i \in F\)'' with ``\(i \rightarrow \infty\)''.
\end{proposition}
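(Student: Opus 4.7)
The plan is to show the strong form $\alpha(A_i) - \alpha(p_k(z)_i) = o(1)$ as the integer $i \to \infty$, which at one stroke gives the four asserted equalities: cluster points of the two families must then agree, whether we regard them as sequences indexed by $\N$ or as nets indexed by the divisibility poset on $F = \N$. The reduction proceeds in two steps through the Smith normal form $B := SAT$, namely by comparing $\alpha(A_i)$ with $\alpha(B_i)$ and then $\alpha(B_i)$ with $\alpha(p_k(z)_i)$.

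The unifying computational device is the finite Fourier transform, which identifies $\C[\Z/i\Z]$ with $\bigoplus_{\nu=0}^{i-1} \C$ by evaluation at the $i$-th roots of unity $\zeta_i^\nu$. Under this unitary change of basis, $A_i$ becomes block-diagonal with blocks $A(\zeta_i^\nu) \in M(r,s;\C)$, the matrix obtained from $A$ by evaluating its Laurent polynomial entries at $z = \zeta_i^\nu$; likewise $B_i$ has blocks $S(\zeta_i^\nu) A(\zeta_i^\nu) T(\zeta_i^\nu)$. Since $S, T$ are invertible over $\C[\Z]$, their determinants are units---nonzero Laurent monomials, hence nonvanishing on $S^1$---and by continuity on the compact circle the norms $\|S(\omega)^{\pm 1}\|, \|T(\omega)^{\pm 1}\|$ are bounded uniformly by some $M$. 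Weyl's singular value inequality for products, applied at each $\zeta_i^\nu$ and then re-sorted into the ambient order, yields $C^{-1} \sigma_j(A_i) \le \sigma_j(B_i) \le C \sigma_j(A_i)$ for every $j$, with $C = M^2$. In particular $\rank A_i = \rank B_i$ and $|\log \sigma^+(A_i) - \log \sigma^+(B_i)| \le \log C$.

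For the second leg, $B$ is block-diagonal with entries $p_1, \ldots, p_k$, so the singular values of $B_i$ are the disjoint union of those of the $p_l(z)_i$. A local analysis near each root $a \in S^1$ of $p_l$---which by $p_l \mid p_k$ is also a root of $p_k$ of at least the same multiplicity---combined with the normalisation $|p_k| \le |p_l|$ on $S^1$, shows that $\sigma^+(p_k(z)_i) \le \sigma^+(p_l(z)_i)$ for every $l$ and all sufficiently large $i$; hence $\sigma^+(B_i) = \sigma^+(p_k(z)_i)$. The multiplicity bound $m^+(p_l(z)_i) \le 2 u_l$ from Section~\ref{section:onebyone} (valid for large $i$) then gives $m^+(B_i), m^+(p_k(z)_i) = O(1)$.

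The only real obstacle is to bound $m^+(A_i)$ itself. Any $j \le r_i$ with $\sigma_j(A_i) = \sigma^+(A_i)$ translates under the singular value comparison to $\sigma_j(B_i) \in (0, C^2 \sigma^+(p_k(z)_i)]$, so $m^+(A_i)$ is at most the total number of positive singular values of $B_i$ in this $C^2$-multiplicative neighbourhood. The explicit asymptotics $\sigma^+(p_k(z)_i) \asymp i^{-\mu_0}$ and $\#\{\nu : 0 < |p_l(\zeta_i^\nu)| \le C' i^{-\mu_0}\} = O(1)$ already established in Section~\ref{section:onebyone} collapse this count to a uniform $O(1)$. With both $\log m^+$ of order $O(1)$, both $\log \sigma^+$ tending to $-\infty$ with bounded difference, and $\alpha(A_i) = O(1)$ a priori (via the same power-law upper bound on $\sigma^+(A_i)$), a short algebraic manipulation of the defining quotient yields $\alpha(A_i) - \alpha(p_k(z)_i) = O(1/\log i)$, completing the argument.
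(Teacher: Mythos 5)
Your argument is correct and follows the same overall structure as the paper's proof---Smith normal form, a uniform two-sided singular value comparison \(C^{-1}\sigma_j(A_i) \le \sigma_j((SAT)_i) \le C\sigma_j(A_i)\), and a uniform bound on \(m^+(A_i)\)---but you obtain the singular value comparison by a genuinely different route. The paper invokes L\"uck's Lemma~13.33 on the spectrum of \(r^{(2)}_{SS^*}\) and \(r^{(2)}_{TT^*}\) in \(\mathcal{N}(G)\) and then projects to the finite quotients; you instead block-diagonalize via the finite Fourier transform and observe that \(\|S(\omega)^{\pm 1}\|\), \(\|T(\omega)^{\pm 1}\|\) are continuous on the compact circle, hence uniformly bounded. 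Your route is more elementary and self-contained (no von Neumann algebra input), at the cost of one verification you leave implicit: that the blockwise comparison of singular values survives re-sorting into the ambient nonincreasing order. (It does---by a simple counting/majorization argument---but it deserves a sentence.) You also organize the multiplicity bound differently: the paper sets up explicit constants from Lemma~\ref{lemma:growthnearroots} and counts \(2NK\) singular values of \((SAT)_i\) by pigeon hole, whereas you transfer the count to \(B_i\) through the singular value comparison and bound the number of roots of unity landing in the critical window; both are sound. Two minor slips to flag: the two-sided asymptotic \(\sigma^+(p_k(z)_i) \asymp i^{-\mu_0}\) does \emph{not} hold for all \(i\)---only the upper bound \(\sigma^+(p_k(z)_i) = O(i^{-\mu_0})\) is uniform, and the failure of the lower bound along exceptional subsequences is precisely the Diophantine phenomenon behind the counterexample in Section~\ref{section:onebyone}---and the \(O(1)\) count is not literally ``already established in Section~\ref{section:onebyone}'' for general \(i\) (there it is proved only along the special subsequence \((Ki_j)\)); the quantitative input you actually need is Lemma~\ref{lemma:growthnearroots}. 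Neither slip affects the conclusion, since only the upper bound on \(\sigma^+\) and the growth estimate near roots enter.
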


\noindent The proof requires some labor.  We prepare it with a lemma that captures those properties of the functions \(t \mapsto \lvert p_l(e^{\ima 2 \pi t}) \rvert\) that are relevant for computing alpha numbers.

\begin{lemma} \label{lemma:growthnearroots}
  Let \(p_1(z), \ldots, p_k(z) \in \C[z,z^{-1}]\) be complex Laurent polynomials.  Then there exists \(0 < \varepsilon < 1\) and there exist constants \(d, D > 0\) such that for every polynomial \(p_l(z)\)

  \begin{enumerate}[(i)]
  \item \label{item:estimatearoundroots} we have the inequality
    \[ d|t|^{\mu} \le |p_l(a e^{\ima 2 \pi t})| \le D|t|^{\mu} \]
    for every root \(a\) of \(p_l(z)\) on \(S^1\) and each \(t \in (-\varepsilon, \varepsilon)\) where \(\mu\) is the multiplicity of \(a\),
    \item \label{item:monotonearoundroots} the function \(|p_l(ae^{\ima 2\pi t})|\) is monotone decreasing for \(t \in (-\varepsilon, 0]\) and monotone increasing for \(t \in [0, \varepsilon)\) for every root \(a\) of \(p_l(z)\) on \(S^1\),
    \item \label{item:estimateoutsideroots} the function \(|p_l(e^{\ima 2\pi t})|\) is bounded from below by \(d \varepsilon^{\mu_0}\) on the complement of all open \(\varepsilon\)-balls around the roots of \(p_l(z)\) on \(S^1\) where \(\mu_0\) is the maximal multiplicity among all the roots of all polynomials \(p_1(z), \ldots, p_k(z)\).
      \end{enumerate}
\end{lemma}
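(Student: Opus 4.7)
The plan is to treat each polynomial individually, establish the three conditions with constants and radii that depend on the polynomial and the root, and then take minima and maxima over the finitely many pairs (polynomial, root on $S^1$) to extract uniform $\varepsilon$, $d$, $D$.

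First I would factor $p_l(z) = (z-a)^{\mu} q_l(z)$ at each root $a \in S^1$ of multiplicity $\mu$, so that $q_l(a) \neq 0$. Parameterising $z = a e^{\ima 2\pi t}$ gives $|z-a| = 2|\sin(\pi t)|$, and the elementary inequality $\pi |t| \leq 2|\sin(\pi t)| \leq 2\pi |t|$, valid for $|t| \leq \tfrac{1}{2}$, combined with continuity and nonvanishing of $|q_l(a e^{\ima 2\pi t})|$ at $t=0$ produces two-sided bounds $d_{l,a} |t|^{\mu} \leq |p_l(a e^{\ima 2\pi t})| \leq D_{l,a} |t|^{\mu}$ on some interval $(-\varepsilon_{l,a}, \varepsilon_{l,a})$. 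That settles \eqref{item:estimatearoundroots} root by root.

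For \eqref{item:monotonearoundroots} I would examine the real-analytic function $f(t) = |p_l(a e^{\ima 2\pi t})|^2$. Expanding $(e^{\ima 2\pi t} - 1)^{\mu} = (\ima 2\pi t)^{\mu}(1 + O(t))$ yields $f(t) = (2\pi)^{2\mu} |q_l(a)|^2 t^{2\mu}(1 + O(t))$ near $t=0$, hence $f'(t) = 2\mu(2\pi)^{2\mu} |q_l(a)|^2 t^{2\mu-1}(1+O(t))$ agrees in sign with $t$ on some punctured neighbourhood of zero. Shrinking the radius further if necessary, and passing through the order-preserving square root on $[0,\infty)$, gives the desired monotonicity of $|p_l(a e^{\ima 2\pi t})|$ on each of $(-\varepsilon,0]$ and $[0,\varepsilon)$.

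For \eqref{item:estimateoutsideroots}, fix a common $\varepsilon$ no larger than any of the root-dependent radii above. For each $l$ the complement of the open $\varepsilon$-balls around the roots of $p_l$ on $S^1$ is compact and $|p_l|$ is continuous and strictly positive there, so it attains a positive minimum $b_l$. Because $\varepsilon^{\mu_0} < 1$ and because decreasing $d$ only strengthens the lower bound in \eqref{item:estimatearoundroots}, one may shrink $d$ until $d\varepsilon^{\mu_0} \leq \min_l b_l$ while also enforcing $d \leq \min_{l,a} d_{l,a}$ and $D \geq \max_{l,a} D_{l,a}$; all three clauses then hold simultaneously. The only mild obstacle is bookkeeping: one must choose $\varepsilon$ small enough that the local expansions driving both \eqref{item:estimatearoundroots} and \eqref{item:monotonearoundroots} remain valid for every root of every polynomial at once, after which the uniform constants drop out from finite minima and maxima.
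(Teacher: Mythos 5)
Your proposal is correct and follows essentially the same route as the paper: factor out each unit-circle root, parameterize by $z = a e^{\ima 2\pi t}$, use $|z - a| = 2|\sin(\pi t)|$ with elementary sine bounds and boundedness of the nonvanishing cofactor to get \eqref{item:estimatearoundroots}, then take finite minima and maxima over all (polynomial, root) pairs. The paper dispatches \eqref{item:monotonearoundroots} and \eqref{item:estimateoutsideroots} with a one-line remark, whereas you spell out the derivative and compactness arguments, but the underlying reasoning is the same.
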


\begin{proof}
 Let \(a \in S^1\) be a root of \(p_l(z)\) of multiplicity \(\mu\).  Let \(0 < \delta < 2\) be so small that \(p(z)\) has no second root in \(B_\delta(a)\),  the closed \(\delta\)-ball around \(a\).  Let \(d' > 0\) and \(D' > 0\) be given by the minimum and maximum, respectively, of \(\left| \frac{p(z)}{(z-a)^\mu}\right|\) for \(z \in B_\delta(a)\).  Set \(\varepsilon = \frac{1}{\pi} \arcsin(\frac{\delta}{2})\), so that in particular \(\varepsilon\) is bounded from above by \(\frac{1}{2}\), and set \(d = d' 4^\mu\) and \(D = D' (2 \pi)^\mu\). Then for \(|t| < \varepsilon\) we have
  \begin{align*}
|p_l(a e^{\ima 2 \pi t})| & \le D' |ae^{\ima 2 \pi t} - a|^\mu = D' |e^{\ima 2 \pi t} - 1|^\mu = D' 2^\mu |\sin(\pi t)|^\mu \\
& \le D' (2\pi)^\mu|t|^\mu = D |t|^\mu
\end{align*}
and similarly
\[ |p_l(a e^{\ima 2 \pi t})| \ge d' 2^\mu |\sin(\pi t)|^\mu \ge d' 4^\mu |t|^\mu = d |t|^\mu. \]
We repeat this construction for all the remaining roots of \(p_l(z)\) on \(S^1\) and for all the remaining polynomials.  The minimal occurring \(\varepsilon\) and \(d\) together with the maximal occurring \(D\) will then work for all roots and polynomials and gives \eqref{item:estimatearoundroots}.  It is clear that since \(p_l(z)\) is a polynomial, we can additionally achieve \eqref{item:monotonearoundroots} and \eqref{item:estimateoutsideroots} by making \(\varepsilon\) smaller, if necessary.
\end{proof}

\begin{proof}[Proof of Proposition~\ref{prop:rbystoonebyone}.]
For any two matrices \(M, N \in M(n,n; \C)\) we have the inequalities of singular values for each \(t = 1, \ldots, n\)
\begin{align}
\label{eq:ineqsingvalues1} \sigma_n(M) \sigma_t(N) & \le \sigma_t(MN) \le \sigma_1(M) \sigma_t(N) \\
\label{eq:ineqsingvalues2} \sigma_t(M) \sigma_n(N) & \le \sigma_t(MN) \le \sigma_t(M) \sigma_1(N)
\end{align}
as given for instance in \cite{Hogben:HandbookLinAl}*{24.4.7\,(c), p.\,24-8}.  Here, as usual, the singular values are listed in nonincreasing order.  Of course the second inequality follows from the first because \(\sigma_t(M) = \sigma_t(M^\top)\).  We apply these inequalities to our setting as follows.  Let \(m  = \max \{r, s\}\) and view the matrices \(A_i\) as lying in \(M(mi,mi; \C)\) by embedding \(A_i\) in the upper left corner of an \((mi \times mi)\)-matrix, filling up the remaining entries with zeros.  If \(r < s\) we consider \(S_i\) as an element of \(\textup{GL}(mi; \C)\) by overwriting the upper left block of an \((mi \times mi)\)-identity matrix with \(S_i\) and similarly for \(T_i\) in place of \(S_i\) if \(r > s\).  Since both \(S\) and \(T\) are invertible over the group ring \(\C[\Z]\), it follows from \cite{Lueck:L2Invariants}*{Lemma~13.33, p.\,466} that the spectrum of \(r^{(2)}_{SS^*}\) and \(r^{(2)}_{TT^*}\) is contained in \([C^{-1}, C]\) for some \(C \ge 1\).  Since the operator norm of the projection map \(L^1(G) \rightarrow L^1(G/G_i)\) is bounded by one, it follows that the eigenvalues of \((SS^*)_i\) and \((TT^*)_i\) are likewise constrained to lie within \([C^{-1}, C]\).  Therefore \(C^{-\frac{1}{2}} \le \sigma_t(S_i), \sigma_t(T_i) \le C^{\frac{1}{2}}\) for each \(t = 1, \ldots, mi\) so that the inequalities~\eqref{eq:ineqsingvalues1} and~\eqref{eq:ineqsingvalues2} give
\begin{equation} \label{eq:boundedsingvalue} C^{-1} \sigma_t((SAT)_i)  \le \sigma_t(A_i) \le C \sigma_t((SAT)_i). \end{equation}
The special case \(t = \rank_\C (A_i)\) gives
\begin{equation} \label{eq:boundedsmallestsingvalue} C^{-1} \sigma^+((SAT)_i)  \le \sigma^+(A_i) \le C \sigma^+((SAT)_i). \end{equation}
Next we show that there is \(M > 0\) such that
\begin{equation} \label{eq:boundedmultiplicities} 1 \le m^+(A_{i}) \le M \end{equation}
for all sufficiently large \(i\).  To this end, let \(r_i = \rank_\C A_i\) so that \(\sigma^+(A_i) = \sigma_{r_i}(A_i)\).  Let \(\mu_0\) be the maximal occurring multiplicity among the roots of \(p_k(z)\) on \(S^1\).  Let \(\varepsilon > 0\) and \(d, D > 0\) be the constants from Lemma~\ref{lemma:growthnearroots} applied to the polynomials \(p_1(z), \ldots, p_k(z)\) which form the diagonal of the matrix \(SAT\).  Pick a positive integer
\begin{equation} \label{eq:constantk} K > \left(\frac{C^2 D}{d}\right)^{\frac{1}{\mu_0}} + 1 \end{equation}
and set \(\delta = \frac{d}{D}  \varepsilon^{\mu_0}\).  Now we consider \(i\) so large that at least \(2K\) of the \(i\)-th roots of unity lie in any open \(\delta\)-ball around any point on \(S^1\).  By Lemma~\ref{lemma:growthnearroots}\,\eqref{item:estimatearoundroots} and \eqref{item:monotonearoundroots}, evaluating the function \(|p_l(z)|\) in the \(2K\) roots of unity closest to any root \(a \in S^1\) gives values smaller than \(D (\frac{d}{D} \varepsilon^{\mu_0})^\mu \le d \varepsilon^{\mu_0}\) where \(\mu\) was the multiplicity of \(a\).  So if \(N\) denotes the sum of the number of distinct roots of each \(p_l(z)\), then by Lemma~\ref{lemma:growthnearroots}\,\eqref{item:estimateoutsideroots} the first \(2KN\) (positive) singular values of \((SAT)_i\) are given by evaluating some \(|p_l(z)|\) within the \(\varepsilon\)-ball of some root.  By the pigeon hole principle there is one root \(a \in S^1\) of some \(p_l(z)\) such that \(K\) singular values among the smallest \(2KN\) singular values of \((SAT)_i\) are given by evaluating \(|p_l(z)|\) at the \(K\) closest \(i\)-th roots of unity on one side of the root \(a\). Again we denote the multiplicity of \(a\) by \(\mu\).  Using the monotonicity asserted by Lemma~\ref{lemma:growthnearroots}\,\eqref{item:monotonearoundroots} this gives 
\[ \sigma_{{r_i} - 2NK} ((SAT)_i) \ge \left|p_l\left(a e^{\pm \ima \frac{2 \pi}{i} (K - 1)}\right)\right|. \]
Applying Lemma~\ref{lemma:growthnearroots}\,\eqref{item:estimatearoundroots} and inequality~\eqref{eq:constantk} we get
\begin{align*}
\left|p_l\left(a e^{\pm \ima \frac{2 \pi}{i} (K - 1)}\right)\right| \ge d\left(\frac{K-1}{i}\right)^\mu \ge d\left(\frac{K-1}{i}\right)^{\mu_0} > C^2 D \left(\frac{1}{i}\right)^{\mu_0}
\end{align*}
Let \(a_0 \in S^1\) be any root of \(p_k(z)\) with multiplicity \(\mu_0\).  There is an \(i\)-th root of unity \(\xi_i \ne a_0\) which encloses an angle of at most \(\frac{2\pi}{i}\) with \(a_0\).  Applying Lemma~\ref{lemma:growthnearroots}\,\eqref{item:estimatearoundroots} again we obtain
\[ C^2 D \left(\frac{1}{i}\right)^{\mu_0} \ge C^2 \left|p_k\left(a_0 e^{\pm \ima \frac{2 \pi}{i}}\right)\right| \ge C^2 |p_k(\xi_i)| \ge C^2 \sigma^+((SAT)_i). \]
So setting \(M = 2NK\) we have \(\sigma_{r_i - M} ((SAT)_i) > C^2 \sigma^+((SAT)_i)\) for every large enough \(i\).  From inequality~\eqref{eq:boundedsingvalue} we conclude
\[ \sigma_{r_i - M}(A_i) \ge C^{-1} \sigma_{r_i - M}((SAT)_i) > C \sigma^+((SAT)_i) \ge \sigma^+(A_i) \]
which proves inequality~\eqref{eq:boundedmultiplicities}.

Finally note that the inequality \(|p_{l+1}(z)| \le |p_l(z)|\) gives \(\sigma^+((SAT)_i) = \sigma^+((p_k(z))_i)\). Inequalities~\eqref{eq:boundedsmallestsingvalue} thus yields
\begin{equation} \label{eq:boundonalphanumbers}
  \frac{\log\left(\frac{m^+(A_i)}{i}\right)}{\log (C^{-1}\sigma^+((p_k(z))_i))} \le \alpha(A_i) \le \frac{\log\left(\frac{m^+(A_i)}{i}\right)}{\log (C\sigma^+((p_k(z))_i))}.
\end{equation}
We can rewrite the outer terms as
\[ \frac{\log\left(\frac{m^+(A_i)}{i}\right)}{\log (C^{\pm 1}\sigma^+((p_k(z))_i))} = \frac{\log\left(\frac{m^+(p_k(z)_i)}{i}\right) + \log\left(\frac{m^+(A_i)}{m^+(p_k(z)_i)}\right)}{\log\left(\sigma^+(p_k(z)_i)\right)\left(1 \pm \frac{\log C}{\log(\sigma^+(p_k(z)_i))}\right)}. \]
Since the multiplicities are bounded according to inequality~\eqref{eq:boundedmultiplicities}, we see from this that for an increasing sequence of positive integers \((i_j)\) we have \(\lim_{j \rightarrow \infty} \alpha(A_{i_j}) = c\) if and only if \(\lim_{j \rightarrow \infty} \alpha(p_k(z)_{i_j}) = c\).  As a consequence the sequences \((\alpha(A_i))_{i \ge 0}\) and \((\alpha(p_k(z)_i))_{i \ge 0}\) share the same set of cluster points.  Considering integer sequences of the form \((K i_j)\) for any positive integer \(K\), the same goes for the nets \((\alpha(A_i))_{i \in F}\) and \((\alpha(p_k(z)))_{i \in F}\).  This clearly implies the proposition.
\end{proof}

\noindent This answers Question~\ref{question:approxmatrixversion}\,\eqref{item:sup} affirmatively for the case \(G = \Z\).

\section{The case of a virtually cyclic group}
\label{section:virtuallycyclic}

\noindent Finally let \(G\) be infinite virtually cyclic so that \(G\) contains an infinite cyclic subgroup \(Z \le G\) with \([G \colon Z] = n < \infty\).  By going over to the normal core, if need be, we can and will assume that \(Z\) is a normal subgroup.
We choose representatives \(g_i \in G\) such that \(Z \backslash G = \{ Z g_1, \ldots, Z g_n\}\).  Let \(A \in M(r,s; \C G)\).  Right multiplication with \(A\) defines a homomorphism \((\C G)^r \rightarrow (\C G)^s\) of left \(\C G\)-modules.  If we consider \(\C G\), the free left \(\C G\)-module of rank one, as a left \(\C Z\)-module, then it is free of rank \(n\) and a basis is given by \(g_1, \ldots, g_n \in \C G\).  Accordingly, viewing right multiplication with \(A\) as a homomorphism \((\C Z)^{rn} \rightarrow (\C Z)^{sn}\) of left \(\C Z\)-modules, it is given by right multiplication with the matrix \(\res^Z_G(A) \in M(rn, sn; \C Z)\) that results from \(A\) by replacing the \((p,q)\)-th entry \(\sum_{g \in G} \lambda^{p,q}_g g\) with the \((n \times n)\)-matrix over \(\C Z\) whose \((u,v)\)-th entry is \(\sum_{h \in Z} \lambda^{p,q}_{g_u^{-1} h g_v} h\) for \(1 \le u, v \le n\).

Let \(Z_i\) be the unique subgroup of \(Z\) with \([Z : Z_i] = i\).  Then \([G : Z_i] = ni\) and \(Z_i\) is normal in \(G\) because \(Z_i\) is characteristic in \(Z\).

\begin{proposition} \label{prop:equalmatrices}
We have \(\res^Z_G(A)_i = A_i\) as elements in \(M(r n i, s n i; \C)\).
\end{proposition}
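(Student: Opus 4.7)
The plan is to exhibit both matrices as coordinate representations of a single $\C$-linear operator on the same complex vector space, with respect to matching bases. First observe that since $Z_i$ is characteristic in $Z$ and $Z$ is normal in $G$, the subgroup $Z_i$ is also normal in $G$, so $Z/Z_i$ sits inside $G/Z_i$ as a normal subgroup of index $n$. Let $\pi_i \colon \C G \to \C(G/Z_i)$ and $\pi'_i \colon \C Z \to \C(Z/Z_i)$ be the canonical projections and write $\bar{A} = \pi_i(A)$. By construction, $A_i$ is the matrix of the operator $T_{\bar{A}} \colon \C(G/Z_i)^r \to \C(G/Z_i)^s$, $x \mapsto x\bar{A}$, in the basis $\{\, e_p \otimes \bar{g} \colon 1 \le p \le r,\ \bar{g} \in G/Z_i \,\}$, ordered lexicographically with $p$ outer and $\bar{g}$ inner.

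The key step will be to exploit the coset decomposition $G/Z_i = \bigsqcup_{u=1}^{n} (Z/Z_i)\,\bar{g}_u$ to obtain the identification
\[
\C(G/Z_i) = \bigoplus_{u=1}^{n} \C(Z/Z_i)\,\bar{g}_u
\]
of left $\C(Z/Z_i)$-modules. Reindexing the basis of $\C(G/Z_i)^r$ via $\bar{g} = \bar{z}\,\bar{g}_u \leftrightarrow (u,\bar{z})$, with $u$ outer and $\bar{z}$ inner, and with the orderings of $Z/Z_i$ and of $\{g_u\}$ chosen compatibly with the orderings used to embed $\C(Z/Z_i) \hookrightarrow M(i,i;\C)$ and to block-label $\res^Z_G(A)$, leaves the matrix $A_i$ literally unchanged. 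In this reindexed form $A_i$ becomes an $(rn \times sn)$-matrix of $(i \times i)$-blocks whose block at position $((p,u),(q,v))$ represents the $\C(Z/Z_i)$-linear map $\C(Z/Z_i) \to \C(Z/Z_i)$ sending $\bar{z}$ to the $\bar{g}_v$-component of $\bar{z}\,\bar{g}_u \cdot \pi_i(a^{p,q})$.

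It then remains to verify that this coincides with $\res^Z_G(A)_i$. Writing $a^{p,q} = \sum_g \lambda^{p,q}_g g$ for the $(p,q)$-entry of $A$, a short unwinding should show that both the $((p,u,\bar{z}),(q,v,\bar{z}'))$-entry of the reindexed $A_i$ and the corresponding entry of $\res^Z_G(A)_i$ equal the common sum
\[
\sum_{z_0 \in Z_i} \lambda^{p,q}_{g_u^{-1} z^{-1} z' z_0 g_v}.
\]
On the $A_i$-side this arises from computing the coefficient of $\bar{z}'\,\bar{g}_v$ in $\bar{z}\,\bar{g}_u \cdot \pi_i(a^{p,q})$ and using normality of $Z_i$ in $G$ to commute $g_v$ past $Z_i$; on the $\res^Z_G(A)_i$-side it falls out of the defining formula $b^{(p,u),(q,v)} = \sum_{h \in Z} \lambda^{p,q}_{g_u^{-1} h g_v} h$ after substituting $h = z^{-1} z' z_0$. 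The main obstacle is not any single calculation but the bookkeeping required to fix the orderings on $Z/Z_i$ and on the coset representatives $\{g_u\}$ consistently on both sides, so that the resulting matrices literally agree as elements of $M(rni,sni;\C)$ rather than merely up to conjugation by a permutation matrix.
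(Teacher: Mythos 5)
Your proposal is correct and follows essentially the same route as the paper: both amount to realizing each side over $\C$ by a direct computation of the coefficient of a coset basis element, with the coset decomposition $G/Z_i = \bigsqcup_u (Z/Z_i)\bar g_u$ providing exactly the reindexing the paper uses (outer index $p$, then $u$, then a coset representative of $Z/Z_i$), and both computations land on the sum $\sum_{z_0 \in Z_i}\lambda^{p,q}_{g_u^{-1}z^{-1}z'z_0 g_v}$. One tiny remark: the reshuffle you attribute to normality of $Z_i$ in $G$ actually just uses that $Z$ is abelian (to rewrite $z^{-1}z_0 z'$ as $z^{-1}z'z_0$); normality of $Z_i$ in $G$ is what makes $G/Z_i$ a group in the first place.
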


\begin{proof}
We pick representatives \(Z_i \backslash Z = \{ Z_i h_1, \ldots, Z_i h_i \}\) and verify that for \(1 \le p \le r\) and \(1 \le q \le s\) as well as \(1 \le u, v \le n\) we have
\[ (\res^Z_G(A)_i)_{(p-1)n + u,(q-1)n + v} = \sum_{l=1}^i \left( \sum_{h \in Z_i} \lambda_{g_u^{-1} h h_l g_v}^{p,q} \right) Z_i h_l. \]
Multiplication with a fixed coset \(Z_i h_k\) gives
\[ Z_i h_k \sum_{l=1}^i \left( \sum_{h \in Z_i} \lambda_{g_u^{-1} h h_l g_v}^{p,q} \right) Z_i h_l = \sum_{l=1}^i \left( \sum_{h \in Z_i} \lambda_{g_u^{-1} h_k^{-1} h h_l g_v}^{p,q} \right) Z_i h_l. \]
Hence \(\res_G^Z(A)_i\) is realized over \(\C\) by replacing the entry at \(((p-1)n + u,(q-1)n + v)\) with a (circulant) \((i \times i)\)-matrix whose \((k,l)\)-th entry is \(\sum_{h \in Z_i} \lambda_{g_u^{-1} h_k^{-1} h h_l g_v}^{p,q}\).

To realize \(A_i\) as a matrix over \(\C\) we now use our chosen representatives to list the cosets of \(Z_i \backslash G\) in this order as
\[\{ Z_i h_1 g_1, \ldots,   Z_i h_i g_1, \quad \ldots \quad ,  Z_i h_1 g_n, \ldots,  Z_i h_i g_n\}. \]
Again we compute for \(1 \le p \le r\) and \(1 \le q \le s\) as well as \(1 \le u, v \le n\) and \(1 \le k \le i\) that
\[ Z_i h_k g_u \sum_{g \in G} \lambda^{p,q}_g Z_i g = \sum_{g \in G} \lambda^{p,q}_{(h_k g_u)^{-1} g} Z_i g = \sum_{v = 1}^n \sum_{l = 1}^i \sum_{h \in Z_i} \lambda^{p,q}_{g_u^{-1} h_k^{-1} h h_l  g_v} Z_i h_l g_v. \]
Thus \(A_i\) is realized over \(\C\) by replacing the \((p,q)\)-th entry with the \((n i \times n i)\)-matrix whose entry at \(((u-1)i + k, (v-1)i + l)\) is \(\sum_{h \in Z_i} \lambda^{p,q}_{g_u^{-1} h_k^{-1} h h_l  g_v}\).  Thus the \(\C\)-matrices \(\res_G^Z(A)_i\) and \(A_i\) coincide.
\end{proof}

\begin{proposition} \label{prop:equallimits}
Let \(F(Z)\) and \(F(G)\) denote the full residual systems of \(Z\) and \(G\), respectively.  Suppose \(\alpha^{(2)}(A) < \infty^+\), then
\[ \liminf_{i \in F(Z)} \alpha(A_i) = \liminf_{i \in F(G)} \alpha(A_i) \quad \text{and} \quad \limsup_{i \in F(Z)} \alpha(A_i) = \limsup_{i \in F(G)} \alpha(A_i). \]
\end{proposition}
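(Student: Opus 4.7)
The plan splits into two parts.  First, $F(Z) \subseteq F(G)$ is cofinal: for any $H \in F(G)$ the intersection $H \cap Z$ is normal in $G$ (as an intersection of two normal subgroups) and has finite index $[Z : H \cap Z] = [HZ : H] \le [G:H]$ in $Z$, so $H \cap Z = Z_j$ for some $j$ and $Z_j \subseteq H$.  A standard fact about nets then guarantees that every cluster point of the restricted net $(\alpha(A_i))_{i \in F(Z)}$ is a cluster point of the full net $(\alpha(A_i))_{i \in F(G)}$.  Consequently,
\[
\limsup_{i \in F(Z)} \alpha(A_i) \le \limsup_{i \in F(G)} \alpha(A_i) \quad\text{and}\quad \liminf_{i \in F(Z)} \alpha(A_i) \ge \liminf_{i \in F(G)} \alpha(A_i).
\]

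For the reverse inequalities I aim to show that every cluster point $c$ of the $F(G)$-net is already a cluster point of the $F(Z)$-net.  Fix a cofinal sequence $(H_k) \subseteq F(G)$ with $\alpha(A_{H_k}) \to c$, set $Z_{i_k} := H_k \cap Z$ and $d_k := [H_k : Z_{i_k}]$; then $d_k$ divides $n$, $[G:H_k] = n i_k / d_k$, and $i_k \to \infty$.  Since $H_k/Z_{i_k}$ is normal in $G/Z_{i_k}$ with quotient $G/H_k$, pull-back identifies $\widehat{G/H_k}$ with the subset of $\widehat{G/Z_{i_k}}$ consisting of irreducibles trivial on $H_k/Z_{i_k}$.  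Decomposing $\C(G/Z_{i_k}) = \bigoplus_{\rho \in \widehat{G/Z_{i_k}}} M(\dim\rho; \C)$ via its irreducibles presents $A_{Z_{i_k}}$, as a $\C$-matrix, as $\bigoplus_\rho A^\rho$ with each block $A^\rho \in M(r\dim\rho, s\dim\rho; \C)$ contributing its singular values with multiplicity $\dim\rho$, and $A_{H_k}$ is the sub-direct-summand indexed by $\widehat{G/H_k}$.  In particular, the multiset of positive singular values of $A_{H_k}$ is contained in that of $A_{Z_{i_k}}$, and $\sigma^+(A_{H_k}) \ge \sigma^+(A_{Z_{i_k}})$.

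Proposition~\ref{prop:equalmatrices} identifies $A_{Z_{i_k}}$ with $\res^Z_G(A)_{i_k}$ as $\C$-matrices, so Proposition~\ref{prop:rbystoonebyone} applied to $\res^Z_G(A) \in M(rn, sn; \C Z)$ yields a uniform bound $m^+(A_{Z_{i_k}}) \le M$, a constant $C' \ge 1$ such that the $M+1$ smallest positive singular values of $A_{Z_{i_k}}$ all lie in $[\sigma^+(A_{Z_{i_k}}), C'\, \sigma^+(A_{Z_{i_k}})]$, and the divergence $\sigma^+(A_{Z_{i_k}}) \to 0$.  The algebraic identity
\[
\alpha(A_{H_k}) - \alpha(A_{Z_{i_k}}) = \frac{\log m^+(A_{H_k}) + \log d_k - \log(n i_k)}{\log \sigma^+(A_{H_k})} - \frac{\log m^+(A_{Z_{i_k}}) - \log(n i_k)}{\log \sigma^+(A_{Z_{i_k}})}
\]
then shows that the right-hand side tends to zero provided $\sigma^+(A_{H_k})$ remains within a bounded multiplicative factor of $\sigma^+(A_{Z_{i_k}})$ and $m^+(A_{H_k})$ stays bounded.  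Both conclusions follow from $\sigma^+(A_{H_k})$ belonging to the cluster of the $M+1$ smallest singular values of $A_{Z_{i_k}}$, and force $\alpha(A_{Z_{i_k}}) \to c$, placing $c$ among the cluster points of the $F(Z)$-net.

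The main obstacle is the scenario in which $\sigma^+(A_{H_k})$ escapes the above cluster, i.e.\ when every irreducible representation of $G/Z_{i_k}$ producing a smallest singular value fails to factor through $G/H_k$.  Since $d_k \le n$ is bounded and at most finitely many ``small-value representations'' can simultaneously be killed, I expect this case to be controllable by an averaging or pigeon-hole argument---replacing $Z_{i_k}$ by a suitable over-group $Z_{j_k} \in F(Z)$ with $Z_{j_k} \subseteq Z_{i_k}$ whose associated approximant $A_{Z_{j_k}}$ matches $A_{H_k}$ at the spectral minimum---in the same spirit as the Diophantine techniques of Section~\ref{section:onebyone}.
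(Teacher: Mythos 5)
The first half of your argument is fine: \(F(Z) \subseteq F(G)\) (each \(Z_i\) is characteristic in \(Z\), hence normal in \(G\)) and cofinal, and cluster points of a cofinal subnet are cluster points of the ambient net, giving \(\limsup_{F(Z)} \le \limsup_{F(G)}\) and \(\liminf_{F(Z)} \ge \liminf_{F(G)}\).

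For the reverse direction, however, you have taken a road that is both far heavier than necessary and, by your own admission, incomplete. The gap you flag at the end---controlling the case where \emph{every} irreducible of \(G/Z_{i_k}\) realizing \(\sigma^+(A_{Z_{i_k}})\) fails to factor through \(G/H_k\), so that \(\sigma^+(A_{H_k})\) jumps above the cluster of small singular values of \(A_{Z_{i_k}}\)---is a genuine gap, and the averaging/pigeon-hole remedy you gesture at is not worked out. Moreover, the claim that Proposition~\ref{prop:rbystoonebyone} produces a constant \(C'\) with the \(M{+}1\) smallest positive singular values of \(A_{Z_{i_k}}\) inside \([\sigma^+, C'\sigma^+]\) is not something that proposition asserts; its proof only yields \(\sigma_{r_i - M} > \sigma^+\), i.e.\ a bound on the multiplicity, not a multiplicative comparison between the first \(M{+}1\) singular values.

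All of this machinery is unnecessary. The missing observation is a purely order-theoretic strengthening of cofinality: for any \(Z_i \in F(Z)\), \emph{every} \(H \in F(G)\) with \(H \supseteq Z_i\) reversed, i.e.\ with \(H \subseteq Z_i\), automatically lies in \(F(Z)\). Indeed such an \(H\) is a finite-index subgroup of the infinite cyclic group \(Z\), hence equals some \(Z_m\), and subgroups of the abelian group \(Z\) are normal in \(Z\). Therefore the tail of the \(F(G)\)-net above \(Z_i\) and the tail of the \(F(Z)\)-net above \(Z_i\) are the \emph{same} set of indices, not merely one contained in the other. Combined with cofinality of \(F(Z)\) in \(F(G)\), the cluster sets of the two nets coincide outright, with no comparison of singular values or multiplicities required. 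This is exactly the route the paper takes, and it closes the direction your spectral argument leaves open.
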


\begin{proof}
Let \(c\) be a cluster point of the net \((\alpha(A_i))_{i \in F(Z)}\) and let \(H \trianglelefteq G\) be a finite index normal subgroup representing some element in \(F(G)\).  Then there are upper bounds \(j\) of \(H \cap Z\) in \(F(Z) \subset F(G)\) with \(\alpha(A_j)\) arbitrarily close to \(c\).  Conversely, let \(c\) be a given cluster point of the net \((\alpha(A_i))_{i \in F(G)}\) and consider \(Z_i \trianglelefteq Z\).  Then \(Z_i\) represents an element in \(F(G)\), thus there are upper bounds \(j\) of \(Z_i\) in \(F(G)\), which actually lie in \(F(Z)\), with \(\alpha(A_j)\) arbitrarily close to \(c\).  Thus the set of cluster points agrees for the nets \((\alpha(A_i))_{i \in F(Z)}\) and \((\alpha(A_i))_{i \in F(G)}\) which in particular implies the proposition.
\end{proof}

\noindent Now we are in the position to complete the proof of our main result.

\begin{proof}[Proof of Theorem~\ref{thm:approxvcycmatrixversion}]  It follows from \cite{Lueck:L2Invariants}*{Theorem~1.12\,(6), p.\,22} that for the spectral distribution functions we have \(F_{\res^Z_G(A)}(\lambda) = n F_A(\lambda)\), hence \(\alpha^{(2)}(A) = \alpha^{(2)}(\res^Z_G(A))\).  Together with the preceding section, Proposition~\ref{prop:equalmatrices} and Proposition~\ref{prop:equallimits} we obtain
\begin{align*}
\alpha^{(2)}(A) &= \alpha^{(2)}(\res^Z_G(A)) = \limsup_{i \in F(Z)} \alpha(\res^Z_G(A)_i) = \\
&= \limsup_{i \in F(Z)} \alpha(A_i) = \limsup_{i \in F(G)} \alpha(A_i).
\end{align*}
This answers Question~\ref{question:approxmatrixversion}\,\eqref{item:sup} in the affirmative for \(F = \C\) and thus for any subfield.  In Section~\ref{subsection:twoexamples} we gave an example answering Question~\ref{question:approxmatrixversion}\,\eqref{item:inf} in the negative for \(F = \Q\) and thus for every larger field.
\end{proof}

\section{The lower limit of alpha numbers} \label{section:liminfpositive}

\noindent In this final section we give the proof of Theorem~\ref{thm:liminfpositive}.  Recall our definition of Baker constants from the end of Section~\ref{subsection:diophantine}.

\begin{theorem} \label{thm:circlerunner}
  Let \(a \neq 1\) be an algebraic number on the unit circle and let \(D\) be a Baker constant of the pair \((a, -1)\).  Then for all \(n \ge 2\) with \(a^n \neq 1\) we have \(\lvert a^n - 1 \rvert \ge \frac{n^{-D}}{2}  \).
\end{theorem}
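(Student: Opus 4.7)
The plan is to convert the quantity $|a^n-1|$ into the absolute value of a linear form in two logarithms, namely $\log a$ and $\log(-1)=\ima\pi$, and then apply the little Baker theorem (Theorem~\ref{thm:littlebaker}). Write $a = e^{\ima \theta}$ with $\theta \in (-\pi, \pi]$, using the principal branch $\log a = \ima \theta$. The case $a = -1$ (equivalently $\theta = \pi$) is trivial: $a^n \neq 1$ forces $n$ odd and $a^n = -1$, so $|a^n - 1| = 2 \geq \tfrac{n^{-D}}{2}$. So assume $|\theta| < \pi$ from now on.

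Let $k \in \Z$ be the integer nearest to $n\theta/(2\pi)$, so that $|n\theta - 2k\pi| \leq \pi$. Since $|\theta|<\pi$, a short case analysis (according to the parity of $n$) gives $|k| \leq n/2$, hence $B := \max\{n, 2|k|\} \leq n$. Set
\[
\Lambda \;=\; n \log a + (-2k)\log(-1) \;=\; \ima\bigl(n\theta - 2k\pi\bigr).
\]
Clearly $\Lambda = 0$ would mean $a^n = 1$, so by hypothesis $\Lambda \neq 0$. Theorem~\ref{thm:littlebaker} therefore yields $|\Lambda| > B^{-D} \geq n^{-D}$.

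It remains to relate $|a^n - 1|$ to $|\Lambda|$. Using the elementary identity $|e^{\ima x} - 1| = 2|\sin(x/2)|$ together with Jordan's inequality $|\sin t| \geq \tfrac{2}{\pi}|t|$ valid for $|t| \leq \pi/2$, applied to $t = n\theta/2 - k\pi$ (which satisfies $|t|\leq \pi/2$), we compute
\[
|a^n - 1| \;=\; 2\bigl|\sin(n\theta/2)\bigr| \;=\; 2\bigl|\sin(n\theta/2 - k\pi)\bigr| \;\geq\; \tfrac{2}{\pi}\,|n\theta - 2k\pi| \;=\; \tfrac{2}{\pi}\,|\Lambda|.
\]
Combining with the Baker bound yields $|a^n - 1| > \tfrac{2}{\pi}\, n^{-D} > \tfrac{n^{-D}}{2}$, since $\tfrac{2}{\pi} > \tfrac{1}{2}$.

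The only delicate point is the bookkeeping that makes $B \leq n$ rather than something like $n+1$; strict inequality $|\theta|<\pi$ (handled by extracting the case $a=-1$ first) is what makes this work, and it is exactly what is needed to land the Baker exponent on $n^{-D}$ instead of a slightly worse bound.
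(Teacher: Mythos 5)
Your proof is correct and takes essentially the same route as the paper: rewrite \(\lvert a^n - 1 \rvert\) as (a quantity controlled by) a linear form \(\Lambda = n\log a - 2k\log(-1)\) in the two logarithms \(\log a\) and \(\log(-1) = \pi\ima\), observe that \(\lvert k \rvert \le n/2\) so that \(B \le n\), and apply the little Baker theorem. The only technical variation is that you pass between \(\lvert a^n - 1\rvert\) and \(\lvert \Lambda\rvert\) via the identity \(\lvert e^{\ima x} - 1 \rvert = 2\lvert\sin(x/2)\rvert\) and Jordan's inequality, and you dispose of \(a = -1\) by hand, whereas the paper first discards the trivial case \(\lvert a^n - 1\rvert > \tfrac{1}{2}\) (which covers \(a=-1\)) and then uses the principal-logarithm estimate \(\lvert\log(1+z)\rvert \le 2\lvert z\rvert\) for \(\lvert z \rvert \le \tfrac{1}{2}\); both yield the required bound.
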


\begin{proof}
The principal value logarithm satisfies \(\lvert \log (1 + z) \rvert \le 2 \lvert z \rvert\) for \(\lvert z \rvert \le \frac{1}{2}\) and is additive up to some integer multiple of \(2 \pi \ima\).  If \(\lvert a^n - 1 \rvert > \frac{1}{2}\), there is nothing to prove.  Otherwise we have
\[ 1 \ge 2 \lvert a^n - 1 \rvert \ge \lvert \log(a^n) \rvert = \lvert n \log a + 2 \pi \ima k \rvert = \lvert n \log a + 2k \log(-1) \rvert, \]
so if \(a^n \ne 1\), Theorem~\ref{thm:littlebaker} gives \(2 \lvert a^n - 1 \rvert \ge \max \{ n, 2 \lvert k \rvert \}^{-D}\).  Moreover, the inequalities \(1 \ge \lvert n \log a + 2 \pi \ima k \rvert\) and \(\lvert \log a \rvert \le \pi\) imply
\[ \lvert k \rvert \le \frac{1 + n \lvert \log a \rvert}{2 \pi} \le \frac{1}{2 \pi} + \frac{n}{2} \]
which is equivalent to \(\lvert k \rvert \le \frac{n}{2}\) because \(k\) and \(n\) are integers.  Thus we obtain \(2 \lvert a^n - 1 \rvert \ge n^{-D} \) as desired.
\end{proof}

\noindent The mere existence of some \(D > 0\) giving the estimate of the theorem also serves as the main ingredient for \cite{Lueck:L2Invariants}*{Lemma~13.53, p.\,478}.  The latter is just the \((1 \times 1)\)-case of the Fuglede--Kadison determinant approximation conjecture for the group \(\Z\).  We recapped a proof here, however, in order to identify the constant \(D\) as the Baker constant in Theorem~\ref{thm:littlebaker}.  This has the virtue that the many estimates on \(D\) in the literature lead to explicit lower bounds on our \(\liminf_{i \in F} \alpha(A_i)\) as we will see in the subsequent corollary.  We admit that the practical value of these bounds is limited because the values for \(D\) given in the literature are typically astronomic.  The constant in \cite{Baker:LinearForms}*{Theorem~2}, for example, is \(D = (32d)^{400}\) times a logarithmic function in the height of \(a\), where \(d\) is the degree of \(a\).

\begin{corollary} \label{cor:boundonliminf}
  Let \(G\) be a virtually cyclic group and let \(A \in M(r,s;\Q G)\) with \(\alpha^{(2)}(A) < \infty^+\).  Choose an infinite cyclic normal subgroup \(Z \trianglelefteq G\) of finite index and let \(p_k(z)\) be the maximal invariant factor of \(\res^G_Z(A)\).  We denote the zeros of \(p_k(z)\) on \(S^1\) by \(a_1, \ldots, a_u\) and let \(D\) be the maximal occurring Baker constant \(D = D(a_t, -1)\) for \(a_t \neq 1\).  Then
\[ \liminf_{i \in F} \alpha(A_i) \ge \frac{\alpha^{(2)}(A)}{1 + D}. \]
\end{corollary}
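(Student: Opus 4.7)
The plan is to reduce the problem to a single Laurent polynomial over $\Z$ and then apply Theorem~\ref{thm:circlerunner} directly to estimate the smallest positive singular values of the finite projections. First I would use Proposition~\ref{prop:equalmatrices} together with Proposition~\ref{prop:equallimits} to replace $A$ by $\res^G_Z(A) \in M(rn, sn; \Q[\Z])$; this preserves $\liminf_{i \in F} \alpha(A_i)$ and preserves $\alpha^{(2)}(A)$ thanks to the observation $F_{\res^G_Z(A)}(\lambda) = n F_A(\lambda)$ used in the proof of Theorem~\ref{thm:approxvcycmatrixversion}. Proposition~\ref{prop:rbystoonebyone} then further reduces everything to the $(1\times 1)$-matrix $(p_k(z))$, whose Novikov--Shubin number is $\alpha^{(2)}((p_k(z))) = 1/\mu_0$ with $\mu_0$ the maximal multiplicity of a root of $p_k$ on $S^1$.

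The core analytic step will be to establish a polynomial lower bound
\[
\sigma^+(p_k(z)_i) \;\ge\; C \cdot i^{-(1+D)\mu_0}
\]
for some constant $C > 0$ and all sufficiently large $i$. Factoring $p_k(z) = c z^j \prod_t (z-a_t)^{\mu_t}$, the off-circle factors contribute a uniform positive lower bound on $S^1$, and for any $i$-th root of unity $\xi$ at least one half of the minimum pairwise distance from the roots $a_1,\ldots,a_u \in S^1$, all but possibly one on-circle factor is also bounded below. It therefore suffices to bound $|\xi - a_{t_0}|$ from below for the closest on-circle root $a_{t_0}$ to $\xi$. Writing $a_{t_0} = e^{2\pi\ima\theta}$, the $i$-th root of unity closest to $a_{t_0}$ satisfies $|\xi - a_{t_0}| = 2\sin(\pi \|i\theta\|/i)$. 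For $a_{t_0}\neq 1$ with $a_{t_0}^i\neq 1$, Theorem~\ref{thm:circlerunner} supplies $|a_{t_0}^i - 1| = 2\sin(\pi \|i\theta\|) \ge i^{-D}/2$, and the elementary inequalities $\sin(x) \le x$ and $\sin(x) \ge 2x/\pi$ on $[0,\pi/2]$ turn this into $|\xi - a_{t_0}| \ge c\cdot i^{-(1+D)}$. If $a_{t_0}^i = 1$ one takes the second closest root of unity at distance $\ge 4/i$, which handles also $a_{t_0} = 1$; in all cases the exponent stays bounded by $1+D$.

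Combining the two steps yields $-\log \sigma^+(p_k(z)_i) \le (1+D)\mu_0 \log i + O(1)$ for large $i$. By inequality~\eqref{eq:boundedmultiplicities} from the proof of Proposition~\ref{prop:rbystoonebyone}, the multiplicities $m^+(p_k(z)_i)$ are uniformly bounded by some integer $M$ for large $i$. Any cluster point $c$ of the net $(\alpha(p_k(z)_i))_{i\in F}$ arises as the limit of some subsequence $\alpha(p_k(z)_{i_n})$ along $i_n\to\infty$, so
\[
c \;=\; \lim_{n} \frac{\log(i_n/m^+(p_k(z)_{i_n}))}{-\log\sigma^+(p_k(z)_{i_n})} \;\ge\; \lim_n \frac{\log i_n - \log M}{(1+D)\mu_0 \log i_n + O(1)} \;=\; \frac{1}{(1+D)\mu_0} \;=\; \frac{\alpha^{(2)}(A)}{1+D},
\]
which gives the claimed bound on $\liminf_{i \in F} \alpha(A_i)$.

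The main obstacle is the one step where Baker's theorem enters: translating the estimate on $|a^i - 1|$ into a lower bound on the distance between $a$ and the nearest $i$-th root of unity. This step loses precisely one factor of $i$, which is exactly what produces the ``$1+$'' in the denominator $1+D$ instead of the na\"ive $D$ one might first hope for, and it is the reason why the constants coming from \cite{Baker:LinearForms} govern the proof.
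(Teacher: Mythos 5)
Your proof is correct and follows essentially the same route as the paper: reduce to the $(1\times 1)$-matrix $(p_k(z))$ via Propositions~\ref{prop:equalmatrices}, \ref{prop:equallimits} and \ref{prop:rbystoonebyone}; apply Theorem~\ref{thm:circlerunner} to bound $|a_t^i-1|$ from below; convert this to a lower bound on the distance from $a_t$ to the nearest $i$-th root of unity, which costs exactly one extra factor of $i$ and produces the $1+D$; and finish by the boundedness of multiplicities. The one minor variation is in the conversion step: the paper writes $|a_t-\zeta_i^l| = |a_t^i-1| / |\sum_{j=0}^{i-1}a_t^{i-j-1}\zeta_i^{lj}|$ and bounds the geometric sum trivially by $i$, whereas you write both quantities as $2\sin(\pi\|i\theta\|)$ and $2\sin(\pi\|i\theta\|/i)$ and relate them via the elementary bounds $2x/\pi\le\sin x\le x$ on $[0,\pi/2]$. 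Both give the same exponent loss of one, so the conclusion is identical; the paper's version also bounds the distance to \emph{every} root of unity, not just the closest, but this extra generality is not needed. You also handle the multiplicative constant from Theorem~\ref{thm:circlerunner} by absorbing it into an $O(1)$ term in the logarithm, whereas the paper absorbs it into the exponent as an auxiliary $\varepsilon$ and lets $\varepsilon\to 0$ at the end; again these are equivalent.
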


\begin{proof}
  Again let \(\mu_0\) be the maximal multiplicity amongst the roots \(a_1, \ldots, a_u\) of the polynomial \(p_k(z)\) which lie on \(S^1\).  As explained in the previous two sections we have
  \[ \alpha^{(2)}(A) = \alpha^{(2)}(\res^Z_G(A)) = \alpha^{(2)}(p_k(z)) = \textstyle \frac{1}{\mu_0}. \]
Fix \(\varepsilon > 0\) and consider \(i \ge 2^{\frac{1}{\varepsilon}}\).  Let \(\zeta_i\) be a primitive \(i\)-th root of unity.  For every \(a_t\) which is not an \(i\)-th root of unity, Theorem~\ref{thm:circlerunner} gives us
\(\lvert a_t^i - 1 \rvert \ge \frac{1}{2} i^{-D} \ge i^{-(D + \varepsilon)}\) and therefore
\begin{equation} \label{eq:dplusone}
  \lvert a_t - \zeta_i^l \vert = \frac{\lvert a_t^i - 1 \rvert}{\left\lvert \sum_{j = 0}^{i-1} a_t^{i-j-1} \zeta_i^{lj} \right\rvert} \ge \frac{1}{i^{D+1 + \varepsilon}}
\end{equation}
for every \(l = 0, \ldots, i-1\).  Let \(\xi_i\) be the (or an) \(i\)-th root of unity for which \(\sigma^+(p_k(z)_i) = \lvert p_k(\xi_i) \rvert\).  Let \(c\), \(d\), \(\mu\) and \(\delta\) be the constants from below the proof of Proposition~\ref{prop:middlengonedge}.  As before, for large enough \(i\) there is one and only one root \(a_{r(i)}\) of \(p_k(z)\) with multiplicity \(\mu_{r(i)}\) that lies within the open \(\delta\)-ball around \(\xi_i\).  If \(a_{r(i)}\) is an \(i\)-th root of unity and \(i\) is large enough, then \(\xi_i\) must be one of the two \(i\)-th roots of unity adjacent to \(a_{r(i)}\) so that we get
\begin{equation} \label{eq:ifrootofunity} \lvert a_{r(i)} - \xi_i \rvert = 2 \sin \left( \frac{\pi}{i} \right) \ge \frac{1}{i}. \end{equation}
So in any case, either from equation~\eqref{eq:dplusone} or from equation~\eqref{eq:ifrootofunity}, we get
\[ \sigma^+(p_k(z)_i) = \lvert p_k(\xi_i) \rvert \ge \frac{\lvert c \rvert d \delta^{\mu - \mu_{r(i)}}}{i^{(D+1+\varepsilon)\mu_{r(i)}}} \ge \frac{\lvert c \rvert c d \delta^\mu}{i^{(D+1 + \varepsilon) \mu_0}}. \]
Since again \(m^+(p_k(z)_i) \le 2u\) for large \(i\), it follows that
\[ \liminf_{i \rightarrow \infty} \alpha(p_k(z)_i) \ge \frac{1}{(D+1+\varepsilon) \mu_0} = \frac{\alpha^{(2)}(A)}{(D+1+\varepsilon)}. \]
with arbitrary \(\varepsilon > 0\).  Lemma~\ref{lemma:sequencenet}, Proposition~\ref{prop:rbystoonebyone}, Proposition~\ref{prop:equalmatrices} and Proposition~\ref{prop:equallimits} finish the proof.
\end{proof}

\noindent Of course, this also completes the proof of Theorem~\ref{thm:liminfpositive}.  

\begin{bibdiv}[References]
\begin{biblist}

\bib{Abbaspour-Moskowitz:Basic}{book}{
   author={Abbaspour, H.},
   author={Moskowitz, M.},
   title={Basic Lie theory},
   publisher={World Scientific Publishing Co. Pte. Ltd., Hackensack, NJ},
   date={2007},
   pages={xvi+427},
   isbn={978-981-270-699-7},
   isbn={981-270-669-2},
   review={\MRref{2364699}{2008i:22001}},
   doi={10.1142/6462},
}

\bib{Baker:LinearForms}{article}{
   author={Baker, A.},
   title={The theory of linear forms in logarithms},
   conference={
      title={Transcendence theory: advances and applications},
      address={Proc. Conf., Univ. Cambridge, Cambridge},
      date={1976},
   },
   book={
      publisher={Academic Press, London},
   },
   date={1977},
   pages={1--27},
   review={\MRref{0498417}{58 \#16543}},
}

\bib{Cassels:Diophantine}{book}{
   author={Cassels, J. W. S.},
   title={An introduction to Diophantine approximation},
   series={Cambridge Tracts in Mathematics and Mathematical Physics, No. 45},
  publisher={Cambridge University Press},
   place={New York},
   date={1957},
   pages={x+166},
   review={\MRref{0087708}{19,396h}},
}

\bib{Gelfond:first}{article}{
  author={Gelfond, A.},
  title={On the approximation of transcendental numbers by algebraic numbers},
  journal={Dokl. Akad. Nauk SSSR},
  volume={2},
  date={1935},
  pages={177--182},
}

\bib{Gelfond:second}{article}{
  author={Gelfond, A.},
  title={On the approximation of algebraic numbers by algebraic numbers and the theory of transcendental numbers},
  journal={Izv. Akad. Nauk SSSR},
  volume={5--6},
  date={1939},
  pages={509--518},
}

\bib{Gelfond:third}{article}{
  author={Gelfond, A.},
  title={On the algebraic independence of transcendental numbers of certain classes},
  journal={Uspehi Mat. Nauk SSSR},
  volume={5},
  date={1949},
  pages={14--48},
}

\bib{Gromov-Shubin:vonNeumannSpectra}{article}{
   author={Gromov, M.},
   author={Shubin, M. A.},
   title={von Neumann spectra near zero},
   journal={Geom. Funct. Anal.},
   volume={1},
   date={1991},
   number={4},
   pages={375--404},
   issn={1016-443X},
   review={\MRref{1132295}{92i:58184}},
   doi={10.1007/BF01895640},
}

\bib{Hogben:HandbookLinAl}{collection}{
   title={Handbook of linear algebra},
   series={Discrete Mathematics and its Applications (Boca Raton)},
   editor={Hogben, L.},
   edition={2},
   publisher={CRC Press, Boca Raton, FL},
   date={2014},
   pages={xxx+1874},
   isbn={978-1-4665-0728-9},
   review={\MRref{3013937}{}},
}

\bib{Lueck:Approximating}{article}{
   author={L{\"u}ck, W.},
   title={Approximating $L^2$-invariants by their finite-dimensional
   analogues},
   journal={Geom. Funct. Anal.},
   volume={4},
   date={1994},
   number={4},
   pages={455--481},
   issn={1016-443X},
   review={\MRref{1280122}{95g:58234}},
   doi={10.1007/BF01896404},
}

\bib{Lueck:L2Invariants}{book}{
   author={L{\"u}ck, W.},
   title={$L^2$-invariants: theory and applications to geometry and
   $K$-theory},
   series={Ergebnisse der Mathematik und ihrer Grenzgebiete. 3. Folge. A
   Series of Modern Surveys in Mathematics [Results in Mathematics and
   Related Areas. 3rd Series. A Series of Modern Surveys in Mathematics]},
   volume={44},
   publisher={Springer-Verlag},
   place={Berlin},
   date={2002},
   pages={xvi+595},
   isbn={3-540-43566-2},
   review={\MRref{1926649}{2003m:58033}},
}

\bib{Lueck:Survey}{article}{
  author={L{\"u}ck, W.},
  title={Survey on approximating \(L^2\)-invariants by their classical counterparts},
  date={2015},
  status={eprint},
  note={\arXiv{1501.07446v1}},
}

\bib{Schmidt:DynamicalSystems}{book}{
   author={Schmidt, K.},
   title={Dynamical systems of algebraic origin},
   series={Progress in Mathematics},
   volume={128},
   publisher={Birkh\"auser Verlag, Basel},
   date={1995},
   pages={xviii+310},
   isbn={3-7643-5174-8},
   review={\MRref{1345152}{97c:28041}},
}

\end{biblist}
\end{bibdiv}

\end{document}